\documentclass[11pt,reqno]{amsart}

\usepackage{amssymb}
\usepackage{amscd}
\usepackage{amsfonts}
\usepackage{mathrsfs}
\usepackage{setspace}
\usepackage{version}
\usepackage{mathtools}
\usepackage{enumerate}
\usepackage[english]{babel}
\usepackage{xcolor}
\usepackage{graphicx}
\usepackage{hyperref}
\usepackage{xfrac}

\newtheorem{theorem}{Theorem}[section]
\newtheorem{lemma}[theorem]{Lemma}
\newtheorem{proposition}[theorem]{Proposition}
\newtheorem{corollary}[theorem]{Corollary}
\theoremstyle{definition}
\newtheorem{definition}[theorem]{Definition}

\newtheorem{example}[theorem]{Example}
\newtheorem{examples}[theorem]{Examples}

\theoremstyle{remark}
\newtheorem{remark}[theorem]{Remark}
\numberwithin{equation}{section}


\newcommand{\I}{I_{\min}}
\newcommand{\essinf}{{\rm ess.inf}}
\newcommand{\esssup}{{\rm ess.sup}}

\newcommand{\EE}{\mathcal{E}}

\newcommand{\MM}{\mathcal{M}}

\newcommand{\XX}{\mathcal{X}}

\newcommand{\field}[1]{\mathbb{#1}}

\newcommand{\R}{\field{R}}
\newcommand{\N}{\field{N}}







\textheight 22.5truecm \textwidth 14.5truecm
\setlength{\oddsidemargin}{0.35in}\setlength{\evensidemargin}{0.35in}
\setlength{\topmargin}{-.5cm}

\begin{document}

	\title[]{Maxitive monetary risk measures: worst-case risk assessment and sharp large deviations}

		\author{Jos\'e M.~Zapata}
		\thanks{I thank two anonymous referees
for  careful reviews of the manuscript and comments which improved the presentation. This work is partially supported by Fundación Séneca - ACyT Región de Murcia project 21955/PI/22 and Ministerio de Ciencia e Innovación
of Spain in the project PID2022-137396NB-I00, funded by MICIU/AEI/10.13039/501100011033
and by `ERDF A way of making Europe'.
}
	\address{Universidad de Murcia.  Dpto. de Estadística e Investigación Operativa,  30100 Espinardo, Murcia, Spain}
	\email{jmzg1@um.es}

		\date{\today}

	\subjclass[2010]{}

	\begin{abstract} 
In decision making under uncertainty and risk, worst-case risk assessments are often conducted using maxitive monetary risk measures. In this article, we study maxitive monetary risk measures on the space 
$L^0$ of all random variables identified modulo almost sure equality. We prove that a monetary risk measure is maxitive and continuous from below if and only if it is a penalized maximum loss. Furthermore, we characterize the  maximum loss as the unique maxitive and law-invariant monetary risk measure. We apply the results to large deviation theory by providing
a general criterion to establish a sharp large deviation estimate for sequences of probability measures. We use these findings to provide a formula for the  asymptotics of the distortion-exponential insurance premium principle under risk pooling.

\smallskip
\noindent \emph{AMS 2020 Subject Classification: 91G70, 91B30, 60F10, 47H07.} 
	\end{abstract}

	\maketitle
	
	\setcounter{tocdepth}{1}
\section{Introduction}

In decision making under uncertainty and risk, a monetary risk measure is a monotone and translation invariant function $\phi$ defined on a vector space  $E$ of functions, containing the constant functions, mapping to the extended real numbers. Here, each $f\in E$ represents the losses of a financial position (negative losses are interpreted as gains), and $\phi(f)$ indicates the additional capital required to make the position $f$ acceptable. For an introduction to risk measures, we refer to F\"{o}llmer and Schied \cite{follmer}.

Often, a monetary risk measure \(\phi\) takes the form of an expectation, where \(\phi(f)\) represents the weighted average of the losses associated with the financial position \(f\). However, risk management decisions frequently rely on assessments of worst-case scenarios rather than averages. These assessments are performed using maxitive monetary risk measures. A monetary risk measure \(\phi\) is said to be maxitive if \(\phi(f \vee g) \le \phi(f) \vee \phi(g)\) for any \(f, g \in E\), where \(f \vee g\) denotes the pointwise supremum of \(f\) and \(g\). { The use of maxitive monetary risk measures in risk analysis is explained in more detail in Section \ref{secmax} below. A  discussion on maxitive decision functions under uncertainty and risk can be also found in \cite{cattaneo}.} 
A basic example of maxitive monetary risk measure is  the \emph{maximum loss} ${\rm ML}(f):=\esssup_X(f)$, which gives the worst possible outcome. 
Another example of maxitive monetary risk measure is the \emph{penalized maximum loss}, which is defined as ${\rm ML}_I(f):=\esssup_X(f-I)$, where $I\colon X\to[0,\infty]$ is a measurable penalty function that penalizes certain outcomes, reflecting additional incomes,  capital reserves, or other loss mitigants  across different scenarios.

The class of maxitive monetary risk measures on the space $C_b(X)$  has been studied in \cite{kupper}. The aim of this paper is to drop the topological and continuity assumptions in \cite{kupper} and establish different representation results for maxitive risk measures on  the space $L^0$  of all real-valued measurable functions on a measurable space $(X,\mathcal{X})$, identified modulo $\mu$-almost sure equality for a given probability measure $\mu$ on $\XX$.  We establish that a monetary risk measure $\phi\colon L^0\to [-\infty,\infty]$ is maxitive and continuous from below on $L^\infty$ if and only if there exists a penalty $I$ such that $\phi(f)={\rm ML}_I(f)$ for all $f\in L^\infty$. 
Furthermore, under some regularity conditions on the underlying probability space $(X,\XX,\mu)$, we show that a monetary risk measure $\phi$ is maxitive and law-invariant if, and only if, $\phi(f)={\rm ML}(f)$. 
Moreover, if $\phi$ is maxitive on the entire space $L^0$, the aforementioned representations extend to functions $f \in L^0$ that are not necessarily bounded, provided that $\phi(tf)<\infty$ for some $t > 1$. 
Additionally, we complement these findings by establishing local conditions on $\phi$ that ensure the  representation as a penalized maximum loss.

Furthermore, by giving a different interpretation of our main results, we explore other  applications to large deviation theory and the limit theory of risk measures:
 
\subsection*{Sharp large deviation estimates} Originating from ruin theory, large deviation theory covers the asymptotic tail behaviour of sequences of probability distributions, see \cite{dembo} and references therein.  
Consider a sequence $(\nu_n)_{n\in\mathbb{N}}$ of probability measures on the measurable space $(X,\mathcal{X})$ such that $\nu_n \ll \mu$ for all $n\in\mathbb{N}$, for the reference probability measure $\mu$. The Varadhan functional $\phi\colon L^\infty\to\mathbb{R}$, defined by
\begin{equation}\label{eq:VaradhanFunctionalIntro}
\phi(f)=\lim_{n\to\infty} \tfrac{1}{n}\log\int_X e^{nf}\,{\rm d}\nu_n,
\end{equation}
shares the properties of a maxitive monetary risk measure\footnote{For simplicity in the exposition, we assume that the limit in $\phi(f)$ exists for all $f\in L^\infty.$}. In the special situation where $X$ is a regular topological space and $\mathcal{X}= \mathcal{B}(X)$ is the Borel $\sigma$-field on $X$, Varadhan's integral lemma asserts that the functional $\phi$ verifies the Laplace principle
\[
\phi(f)=\sup_{x\in X}(f(x)-I(x))\quad\text{ for all }f\in C_b(X)
\]
for a rate function $I\colon X\to[0,\infty]$, 
whenever the sequence $(\nu_n)_{n\in\mathbb{N}}$ satisfies the \emph{large deviation principle} (LDP) with rate  $I$, i.e.
\begin{equation}\label{eq:roughLDP}
-\inf_{x \in {\rm int}(A)} I(x) \le \liminf_{n\to\infty} \tfrac{1}{n} \log\nu_n(A) \le \limsup_{n\to\infty} \tfrac{1}{n} \log \nu_n(A) \le -\inf_{x \in {\rm cl}(A)} I(x)
\end{equation}
for all $A\in\mathcal{B}(X)$.\footnote{Here, ${\rm cl}(A)$ and ${\rm int}(A)$ respectively denote the closure and interior of $A$.} 

In the general case, where $(X,\mathcal{X})$ is an arbitrary measurable space, our representation result yields that the Varadhan functional adheres to the following form of the Laplace principle
\begin{equation}
\label{eq:essLP}
\phi(f)=\esssup_X(f-I)\quad\text{ for all }f\in L^\infty,
\end{equation}
where $I\colon X\to[0,\infty]$ is a measurable function, if and only if the Varadhan functional $\phi$ is continuous from below. Thus, our representation result turns out to be a non-topological extension of Varadhan's integral lemma. Furthermore, the Laplace principle \eqref{eq:essLP} holds if and only if the following sharp form of the LDP is satisfied
\begin{equation}
\label{eq:essLDP}
\lim_{n\to\infty}\tfrac{1}{n}\log\nu_n(A)=-\essinf_A I\quad\text{ for all }A\in\mathcal{X}.
\end{equation}  
The sharp LDP \eqref{eq:essLDP} was first introduced by Barbe and Broniatowski~\cite{barbe}, where it is proven that if $\nu_n$ are the laws of the sample means of a sequence of $\mathbb{R}^d$-valued i.i.d. random variables, then 
the sharp LDP \eqref{eq:essLDP} holds under some regularity conditions. 

We point out that while extensive research has been conducted on the rough LDP \eqref{eq:roughLDP} (see, e.g., \cite{dembo} and references therein), to the best of our knowledge, the sharp LDP \eqref{eq:essLDP} has not been studied beyond the specific case considered in \cite{barbe}.
 
As pointed out by Barbe and Broniatowski~\cite{barbe}, the rough LDP \eqref{eq:roughLDP} often yields very poor estimates compared to the sharp LDP \eqref{eq:essLDP}. For instance, if $A$ has empty interior and is dense (e.g., $A=(\mathbb{R}\setminus\mathbb{Q})^d$), the the left-hand side of \eqref{eq:roughLDP} is  $-\infty$ and the right-hand side of \eqref{eq:roughLDP} is  $0$. 
If $A$ is closed and $\inf_{x\in A}I=I(x_0)$  for a unique $x_0\in A$ which is 
isolated, then  $\inf_{x\in {\rm int}(A)}I(x)<\inf_{x\in A}I(x)$. 
Less trivial examples and discussions can be found in~\cite{barbe,slaby}.  

We use our representation result to provide a criterion for the sharp LDP \eqref{eq:essLDP} applicable to sequences of probability measures on a general measurable space $(X,\mathcal{X})$. Specifically, we establish that the sequence $(\nu_n)_{n\in\mathbb{N}}$ satisfies the sharp LDP \eqref{eq:essLDP} for a measurable function $I\colon X\to[0,\infty]$ whenever the effective domain $\{I<\infty\}$ has a measurable covering $\{I<\infty\}=\cup_{k\in\mathbb{N}} A_k$ such that the Radon-Nikodym derivatives $\tfrac{{\rm d}\nu_n}{{\rm d}\mu}$ satisfy 
\[
\tfrac{1}{n}\log \tfrac{{\rm d}\nu_n}{{\rm d}\mu} \to -I\quad\text{as } n\to\infty\quad\text{a.s. uniformly on }A_k
\]
for all $k\in\mathbb{N}$, and $\limsup_{n\to\infty}\tfrac{1}{n}\log \nu_n(A_k^c)\to -\infty$ as $k\to\infty$. 
We show that this criterion in fact covers the sharp LDP provided in \cite{barbe} for the sample means of a sequence of i.i.d.~random vectors.

\subsection*{A limit result for distorted expectations} 
Our theoretical framework covers  other monetary risk measures beyond the Varadhan functional. Specifically, for a concave distortion \(g \colon [0,1] \to [0,1]\), we define the distorted Varadhan functional as
\[
\phi_g(f) = \lim_{n \to \infty} \tfrac{1}{n} \log \mathcal{E}_{\nu_n, g}(e^{n f})\quad\mbox{ for all }f\in L^\infty,
\]
where \(\mathcal{E}_{\nu_n, g}(f) = \int_0^\infty g \circ \nu_n(f > x) \, \mathrm{d}x\) is the distorted expectation associated with \(g\). This functional retains the properties of a maxitive monetary risk measure.
Moreover, we establish that if the sequence \((\nu_n)_{n \in \mathbb{N}}\) satisfies the sharp LDP with rate $I$, then
\begin{equation}\label{eq:distortedLP}
\phi_g(f) = \esssup_X(f - \tfrac{1}{p} I),
\end{equation}
where \(p\) is the vanishing order of the distortion \(g\), i.e., the limit \(\lim_{x \downarrow 0} \tfrac{g(x)}{x^{1/p}}\) is finite and non-zero.

Tsanakas~\cite{tsanakas} introduced the \emph{distortion-exponential insurance premium principle}, a generalization of the exponential insurance premium principle, where the usual expectation is replaced by a distorted expectation.\footnote{In the exponential premium principle the premium to be paid for a claim $\xi$ is given by $\Pi_{\gamma}(\xi)=\tfrac{1}{\gamma}\log\mathbb{E}(e^{\gamma \xi})$ where $\gamma>0$ is a risk aversion parameter. 
In the distortion-exponential premium principle the premium to be paid for the claim $\xi$ is given by $\Pi_{g,\gamma}(\xi)=\tfrac{1}{\gamma}\log\mathcal{E}_{\mu,g}(e^{\gamma \xi})$. } 
Using \eqref{eq:distortedLP}, we derive the asymptotics of the distortion-exponential insurance premium principle under risk pooling as the number of claims in an insurance portfolio tends to infinity. Specifically, if \(\Pi_{g,\gamma}(\xi_1 + \xi_2 + \cdots + \xi_n)\) is the total premium for a homogeneous portfolio of random variables \(\xi_1, \xi_2, \dots, \xi_n\), assuming distortion \(g\) and risk aversion parameter \(\gamma > 0\), we prove that the  premium \(\pi_n\) per contract\footnote{Since the portfolio is homogeneous, all insured individuals pay the same premium, which is calculated as $\pi_n=\tfrac{1}{n}\Pi_{g,\gamma}(\xi_1 + \xi_2 + \cdots + \xi_n)$.} satisfies
\[
\lim_{n \to \infty} \pi_n = \esssup_{x \in \mathbb{R}} \left( x - \tfrac{1}{p \gamma} I \right),
\]
whenever \(g\) has vanishing order \(p\) and the laws \(\nu_n\) of the averages \(\tfrac{1}{n} (\xi_1 + \cdots + \xi_n)\) satisfy the sharp LDP with rate  \(I\).
 In the special case where the claims \(\xi_1, \xi_2, \dots\) are i.i.d., the individual premium \(\pi_n\) satisfies
\[
\lim_{n \to \infty} \pi_n = \Pi_{p\gamma}(\xi),
\]
where \(\Pi_{p\gamma}\) is the premium obtained using the standard exponential premium principle with risk aversion parameter \(p\gamma\). That is, the distortion-exponential premium principle converges to the usual exponential premium principle where the risk aversion parameter $p\gamma$ is given by the vanishing order of the distortion $g$.

The remainder of the paper is organized as follows.
Section~\ref{sec:preliminaries} introduces the setting, basic concepts, and definitions relevant to our study.
{ In Section~\ref{secmax}, we present the concept of maxitive risk measures and interpret them as tools for worst-case risk evaluation.} 
Section~\ref{sec:mainResults} states the main representation results.
Section~\ref{sec:applications} is devoted to applications of these results, including sharp estimates for sequences of probability measures, limit results for sequences of distorted expectations, and the asymptotic behavior of risk pooling under the distortion-exponential premium principle.
We conclude with an appendix containing auxiliary results and the proofs of the main theorems.

\section{Preliminaries}\label{sec:preliminaries}

\subsection{Notation and Setting}

Throughout this paper, \((X, \mathcal{X})\) denotes a measurable space, and $\mu$ a fixed reference probability measure \(\mu\) on \(\mathcal{X}\). 
We denote by \(L^0\) the space of all (equivalence classes modulo \(\mu\)-almost sure equality of) real-valued measurable functions on \(X\), and by \(L^p := L^p(X, \mathcal{X}, \mu)\), for \(p \in [1,\infty]\), the standard Lebesgue subspaces of \(L^0\). Given two functions  $f, g \in L^0$, we denote by $f \vee g$ (resp., $f \wedge g$) the  pointwise maximum (resp., minimum) of $f$ and $g$. 
In this paper, we deal with two types of essential suprema. The \emph{essential supremum} of a measurable function \(f\colon X \to [-\infty,\infty]\), denoted by \(\esssup_X f\), is defined as 
\[
\esssup_X f = \inf\{r \in \mathbb{R} \colon \mu(f \leq r) = 1\}.
\]
Additionally, the \emph{essential supremum} of a family \(\mathcal{F}\) of measurable functions from \(X\) to \([- \infty, \infty]\) is denoted by
\[
\esssup \mathcal{F} = \underset{f \in \mathcal{F}}{\esssup} f;
\]
we refer to the Section~\ref{sec:A1} in the Appendix for the definition and some properties. We denote by \(\mathcal{M}_1\) the set of all probability measures \(\nu\) on \((X, \mathcal{X})\) such that \(\nu \ll \mu\) (i.e., \(\nu\) is absolutely continuous with respect to \(\mu\)).

\subsection{Monetary Risk Measures}
In this subsection, let \(E\) be a vector sublattice of \(L^0\), that is, \(E\) is a vector subspace of \(L^0\) such that for any \(f, g \in E\), both \(f \vee g\) and \(f \wedge g\) are also elements of \(E\). Additionally, we assume that \(L^\infty \subset E\). 

\begin{remark} Typical examples of vector sublattices of \( L^0 \) that contain \( L^\infty \) are the standard Lebesgue spaces \( L^p \) for \( p \geq 1 \), see e.g.~\cite{aliprantis}. \end{remark}

\begin{definition}
A \emph{monetary risk measure}\footnote{The definition provided here aligns with actuarial science literature, which differs slightly from that used in the mathematical finance literature (see, e.g., \cite{follmer}), where \(\rho\) is a monetary risk measure if \(\phi(f) = \rho(-f)\) satisfies (N), (M), and (T) as defined above.} is a function \(\phi\colon E \to [-\infty,\infty]\) that satisfies the following axioms:
\begin{itemize}
\item[(N)] \emph{Normalization}: \(\phi(0) = 0\),
\item[(M)] \emph{Monotonicity}: If \(f \leq g\)~a.s., then \(\phi(f) \leq \phi(g)\),
\item[(T)] \emph{Translation invariance}: \(\phi(f + c) = \phi(f) + c\) for all \(c \in \mathbb{R}\).
\end{itemize}
\end{definition}
Let \(\phi\colon E \to [-\infty,\infty]\) be a monetary risk measure.
\begin{itemize}
\item We say that \(\phi\) is \emph{continuous from below} if
\[
f_n \uparrow f \text{ a.s. in } E \quad \text{implies} \quad \phi(f_n) \uparrow \phi(f).
\]
\item We say that \(\phi\) is \emph{convex} if 
\[
\phi(t f + (1-t) g) \leq t \phi(f) + (1-t) \phi(g) \quad \text{for all } f, g \in E \text{ and } t \in (0,1).
\]
\item \(\phi\) is \emph{law-invariant} if \(\phi(f) = \phi(g)\) whenever \(f\) and \(g\) have the same distribution under \(\mu\) (i.e., $\mu(f\le x)=\mu(g\le x)$ for all $x\in\R$).
\end{itemize}

In risk analysis, each element \( f \in E \) represents the random losses of a financial position, where negative values correspond to gains. The \emph{acceptance set} of a risk measure \( \phi \) is defined as
\[
\mathcal{A} := \{f \in E \colon \phi(f) \leq 0\}.
\]
This set \( \mathcal{A} \) is interpreted as the collection of all positions that are considered  acceptable, in the sense that they do not require any additional capital. The monetary risk measure \( \phi \) is uniquely determined by its acceptance set through the relation
\[
\phi(f) = \inf\{c \in \mathbb{R} \colon f + c \in \mathcal{A}\},
\]
where the infimum is attained as a minimum whenever \( \phi(f) \) is finite. 
In other words, \( \phi(f) \) represents the minimal amount of capital that must be added to the position \( f \) in order to render it acceptable.

Every monetary risk measure \(\phi\) on \(L^\infty\) is necessarily real-valued. The \emph{convex conjugate} of a convex risk measure \(\phi\colon L^\infty \to \mathbb{R}\) is defined as
\[
\phi^\ast(\nu) := \sup_{f \in L^\infty} \left\{\int_X f \,{\rm d}\nu - \phi(f)\right\} = \sup_{f \in \mathcal{A}} \int_X f \, {\rm d}\nu,
\]
where the last equality follows from (T). The property of being continuous from below for a convex risk measure \(\phi\colon L^\infty \to \mathbb{R}\) is equivalent to lower semicontinuity in the weak-$\ast$ topology \(\sigma(L^\infty, L^1)\) and leads to the following representation
\[
\phi(f) = \sup_{\nu \in \mathcal{M}_1} \left\{\int_X f \, {\rm d}\nu - \phi^\ast(\nu)\right\} \quad \text{for all } f \in L^\infty;
\]
see~\cite[Theorem 4.33]{follmer}.\footnote{We use the terminology of \cite{follmer} with a sign change; what is termed continuity from above in \cite{follmer} corresponds to continuity from below in this paper.}

\section{Maxitive Risk Measures and Worst-Case Risk Assessment}\label{secmax}

Let \( E \) be a vector sublattice of \( L^0 \) that contains \( L^\infty \). The concept of a \emph{maxitive monetary risk measure}, originally studied in~\cite{kupper}, can be adapted to our setting as follows.

\begin{definition} 
A monetary risk measure $\phi\colon E\to [-\infty,\infty]$ is called \emph{maxitive} if it satisfies the inequality $\phi(f\vee g)\leq \phi(f)\vee \phi(g)$ for all $f,g\in E$. We say that $\phi$ is \emph{$\sigma$-maxitive} if, for every sequence $(f_n)_{n\in\mathbb{N}}$ in $E$ such that $\sup_{n\in\mathbb{N}} f_n\in E$, it holds that $\phi(\sup_{n\in\mathbb{N}} f_n)\leq \sup_{n\in\mathbb{N}} \phi(f_n)$. 
\end{definition}

\begin{remark}
By monotonicity, any maxitive monetary risk measure automatically satisfies $\phi(f \vee g) = \phi(f) \vee \phi(g)$ for all $f, g \in E$. 
Every \( \sigma \)-maxitive risk measure is in particular maxitive. Moreover, every maxitive monetary risk measure is convex; see~\cite[Proposition 2.1]{kupper}.
\end{remark}

A canonical example of a \( \sigma \)-maxitive monetary risk measure is the \emph{maximum loss} $\mathrm{ML}(f) := \esssup_X f.$ 
A penalized version is given by
 $\mathrm{ML}_I(f) := \esssup_X (f - I)$, 
where \( I\colon X \to [0, \infty] \) is a measurable function with \( \essinf_X I = 0 \).

A {maxitive} monetary risk measure captures a \emph{worst-case} evaluation of risk. This can be seen as follows. In Proposition~\ref{prop:max} below, we show that a monetary risk measure \( \phi \) is maxitive if and only if, for every financial position \( f \in E \) and every event \( A \in \mathcal{X} \), one has
\begin{equation}\label{eq:decomposition}
\phi(f) = \phi_A(f) \vee \phi_{A^c}(f),
\end{equation}
where \( \phi_A(f) \) denotes the \emph{localized risk evaluation} of \( f \) on \( A \), formally defined in~\eqref{eq:locRiskEv} below.

The decomposition~\eqref{eq:decomposition} implies that, for any set \( A \), the risk evaluation \( \phi(f) \) corresponds to the worst of its localized risk evaluations on \( A \) and \( A^c \). This shows that worst-case risk assessments are  characterized by maxitive monetary risk measures.

Another  feature of maxitive monetary risk measures is that, in contrast to expectations, which average losses across different states, a maxitive monetary risk measure does not allow large profits in some states to compensate for losses in others. To see this, suppose that \( \phi_A(f) \ge \phi_{A^c}(f) \) in~\eqref{eq:decomposition}. Then, for any \( g \in E \) that agrees with \( f \) a.s.~on \( A \), we still have
\[
\phi(f) = \phi_A(f) = \phi_A(g),
\]
regardless of the values that \( g \) takes on \( A^c \). In particular, this implies that  gains on \( A^c \) cannot offset losses  on \( A \).

As a result, maxitivity ensures that risk is not underestimated by averaging outcomes across different scenarios. From a financial perspective, maxitive monetary risk measures align well with a conservative approach to risk assessment, particularly in contexts where {robustness} and {prudence} are critical. This is especially relevant in settings such as {stress testing} or {regulatory capital requirements}, where the goal is to ensure resilience under {extreme adverse events}.
 However, such an approach may be too  pessimistic in  portfolio management,  as it neglects potential benefits of average performance. 

We now formalize this characterization.

\begin{proposition}\label{prop:max}
Let \( \phi\colon E \to [-\infty, \infty] \) be a monetary risk measure. The following are equivalent:
\begin{enumerate}
    \item \label{item:maxitive} \( \phi \) is maxitive.
    \item \label{item:decomp} For all \( f \in E \) and all \( A \in \mathcal{X} \),
    \[
    \phi(f) = \phi_A(f) \vee \phi_{A^c}(f),
    \]
    where
    \begin{equation}\label{eq:locRiskEv}
    \phi_A(f) := \inf\left\{ \phi(g) \colon g \in E,\, f = g \text{ a.s. on } A \right\}.
    \end{equation}
\end{enumerate}
\end{proposition}

\begin{remark}
In the definition of \( \phi_A(f) \), the infimum is taken over all positions \( g \in E \) that agree with \( f \) on \( A \), no matter what they do on \( A^c \). By allowing arbitrarily large gains on \( A^c \), we ignore the risk in that region. As a result, \( \phi_A(f) \) captures the localized risk evaluation of \( f \) on \( A \).
\end{remark}

\begin{proof}
{(1) \(\Rightarrow\) (2):}  
Let \( g, h \in E \) be such that \( g = f \) a.s. on \( A \) and \( h = f \) a.s. on \( A^c \). Since $\phi$ is maxitive and $f=(f\wedge g)\vee (f\wedge h)\in E$,\footnote{Recall that $E$ is a lattice.}
\[
\phi(f) = \phi(f \wedge g) \vee \phi(f \wedge h).
\]
Taking first the infimum over all $g\in E$ such that $g=f$~a.s. on $A$, and then the infimum over all $h\in E$ such that $h=f$~a.s. on $A$, we obtain $\phi(f)=\phi_A(f)\vee \phi_{A^c}(f)$.

{(2) \(\Rightarrow\) (1):}  
Let \( f, g \in E \), and define  \( A := \{f \ge g\} \). Then, using  (2) twice,
\begin{align*}
\phi(f \vee g) &= \phi_A(f \vee g) \vee \phi_{A^c}(f \vee g) \\
&= \phi_A(f) \vee \phi_{A^c}(g) \\
&\le (\phi_A(f) \vee \phi_A(g)) \vee (\phi_{A^c}(f) \vee \phi_{A^c}(g)) \\
&= (\phi_A(f) \vee \phi_{A^c}(f)) \vee (\phi_A(g) \vee \phi_{A^c}(g)) \\
&= \phi(f) \vee \phi(g).
\end{align*}
Thus, \( \phi \) is maxitive.
\end{proof}

\section{Main results}\label{sec:mainResults}

In this section, we present various representation results for maxitive monetary risk measures on \(L^0\). It is  not restrictive to consider the entire space \(L^0\) as the domain. In fact, given a maxitive monetary risk measure \(\phi\) on a sublattice \(E \subset L^0\), we can always consider its extension \(\overline{\phi}\colon L^0 \to [-\infty,\infty]\) defined by
\[
\overline{\phi}(f) := \inf_{g \in E \colon f \leq g} \phi(g),
\]
which is a maxitive monetary risk measure on \(L^0\). 
Throughout this section, we fix a monetary risk measure \(\phi\colon L^0 \to [-\infty,\infty]\) and define its associated   \emph{concentration} \(J\colon \mathcal{X} \to [-\infty,0]\) by
\begin{equation}
\label{eq:concentration}
J_A := \inf_{r < 0} \phi(r1_{A^c}) \quad \text{for each } A \in \mathcal{X}.
\end{equation}
Additionally, we introduce the \emph{minimal penalty}
\begin{equation}
\label{eq:minrate0}
\I := \esssup\{f - \phi(f) \colon f \in L^\infty\},
\end{equation}
and define the set
\begin{equation}\label{eq:unbounded}
L^\phi := \{f \in L^0 \colon \text{there exists } t > 1 \text{ such that } \phi(tf) < \infty\}.
\end{equation}

We now state the main results of this paper, deferring their proofs to Section~\ref{sec:A2} in the Appendix. 
The first main result of this section is presented below.

\begin{theorem}\label{thm:main1}
The following conditions are equivalent:
\begin{enumerate}
    \item \(\phi|_{L^\infty}\) is continuous from below and maxitive on \(L^\infty\).
    \item \(\phi|_{L^\infty}\) is \(\sigma\)-maxitive on \(L^\infty\).
    \item \(\phi|_{L^\infty}\) is convex and continuous from below on \(L^\infty\), and there exists a measurable function \(I\colon X \to [0,\infty]\) such that
    \[
    (\phi|_{L^\infty})^\ast(\nu) = \int_X I \, {\rm d}\nu \quad \text{for all } \nu \in \mathcal{M}_1.
    \]
    \item \(\phi|_{L^\infty}\) is maxitive on \(L^\infty\) and there exists a measurable function \(I\colon X \to [0,\infty]\) such that
    \begin{equation}\label{eq:concentrationRep}
    J_A = -\essinf_A I \quad \text{for all } A \in \mathcal{X}.
    \end{equation}
    \item There exists a measurable function \(I\colon X \to [0,\infty]\) such that
    \begin{equation}\label{eq:repLinfty}
    \phi(f) = {\rm ML}_I(f) \quad \text{for all } f \in L^\infty.
    \end{equation}
\end{enumerate}
If \(\phi\) is maxitive on $L^0$, then the conditions \textup{(1)}--\textup{(5)} are also equivalent to:
\begin{enumerate}\setcounter{enumi}{5}
    \item There exists a measurable function \(I\colon X \to [0,\infty]\) such that
    \begin{equation}\label{eq:repLphi}
    \phi(f) = {\rm ML}_I(f) \quad \text{for all } f \in L^\phi.
    \end{equation}
\end{enumerate}

Moreover, if any of the properties \textup{(3)}--\textup{(6)} is satisfied for a measurable function \(I\colon X \to [0,\infty]\), then necessarily \(I = \I\) a.s.
\end{theorem}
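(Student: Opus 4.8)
The plan is to establish the equivalence of $(1)$--$(5)$ by running the cycle $(5)\Rightarrow(2)\Rightarrow(1)\Rightarrow(3)\Rightarrow(5)$ supplemented by the two-sided arc $(4)\Leftrightarrow(5)$, to treat $(6)$ separately under the standing hypothesis that $\phi$ is maxitive on $L^0$, and finally to read off $I=\I$ from $(5)$. Throughout I would rely on one elementary structural fact: for a monetary risk measure, maxitivity of $\phi|_{L^\infty}$ is equivalent to the acceptance set $\AA_\infty:=\{h\in L^\infty:\phi(h)\le0\}$ being stable under $\vee$. Indeed, if $h_1,h_2\in\AA_\infty$ then $\phi(h_1\vee h_2)=\phi(h_1)\vee\phi(h_2)\le0$; conversely one uses (M) and (T) to replace $h_1,h_2$ by translates lying below $h_1\vee h_2$ that are still acceptable.

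The easy arcs go as follows. $(5)\Rightarrow(2)$ is immediate since ${\rm ML}_I$ is $\sigma$-maxitive (as noted in Section~2). For $(2)\Rightarrow(1)$: a $\sigma$-maxitive risk measure is maxitive, and if $f_n\uparrow f$ in $L^\infty$ then $f=\sup_nf_n$, so $\sigma$-maxitivity gives $\phi(f)\le\sup_n\phi(f_n)$ while (M) gives the reverse, i.e.\ continuity from below. $(5)\Rightarrow(4)$ is a direct computation: for ${\rm ML}_I$ one has, with the usual null-set conventions, $\esssup_X(r1_{A^c}-I)=\max\{-\essinf_AI,\,r-\essinf_{A^c}I\}$, so $J_A=\inf_{r<0}\esssup_X(r1_{A^c}-I)=-\essinf_AI$. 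The inequality $\phi(f)\ge\esssup_X(f-I)$ needed in $(4)\Rightarrow(5)$ uses only (M), (T) and $(4)$: for $B\in\XX$ with $\mu(B)>0$ one has $f\ge(\essinf_Bf)1_B+r1_{B^c}$ for all sufficiently negative $r$, hence $\phi(f)\ge\essinf_Bf+\phi((r-\essinf_Bf)1_{B^c})\ge\essinf_Bf+J_B=\essinf_Bf-\essinf_BI$; taking $B=\{f-I>c\}$ (so that $\essinf_Bf\ge c+\essinf_BI$) and letting $c\uparrow\esssup_X(f-I)$ gives the claim.

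The two substantive steps are $(1)\Rightarrow(3)$ and the reverse inequality in $(4)\Rightarrow(5)$. For $(1)\Rightarrow(3)$: maxitivity implies convexity \cite[Prop.~2.1]{kupper}, so with continuity from below in hand it only remains to produce $I$, and I would take $I:=\I$. Since each $f-\phi(f)$ (for $f\in L^\infty$) lies in $\AA_\infty$ and, conversely, each $h\in\AA_\infty$ satisfies $h\le h-\phi(h)$, one gets $\I=\esssup\AA_\infty$; as $\AA_\infty$ is $\vee$-stable it is upward directed, so there is an increasing sequence $(h_n)$ in $\AA_\infty$ with $h_n\uparrow\I$ a.s. Then $(\phi|_{L^\infty})^\ast(\nu)=\sup_{f\in L^\infty}\int_X(f-\phi(f))\,\mathrm d\nu\le\int_X\I\,\mathrm d\nu$ because each integrand is $\le\I$ a.s.\ and $\nu\ll\mu$, whereas $\phi(h_n)\le0$ together with monotone convergence gives $(\phi|_{L^\infty})^\ast(\nu)\ge\sup_n\int_Xh_n\,\mathrm d\nu=\int_X\I\,\mathrm d\nu$. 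For $(3)\Rightarrow(5)$: the dual representation (valid since $\phi|_{L^\infty}$ is convex and continuous from below) reads $\phi(f)=\sup_{\nu\in\mathcal{M}_1}\int_X(f-I)\,\mathrm d\nu$, and this equals $\esssup_X(f-I)$, the inequality "$\le$" coming from $\nu\ll\mu$ and "$\ge$" by concentrating $\nu$ on $\{f-I>c\}\cap\{I\le k\}$. For the reverse inequality in $(4)\Rightarrow(5)$: fix $\varepsilon>0$, partition the bounded range of $f$ into sets $A_j=\{t_{j-1}\le f<t_j\}$ of mesh $<\varepsilon$, and for large $K$ write $f=\bigvee_jh_j^K$ with $h_j^K:=f1_{A_j}-K1_{A_j^c}$; by maxitivity $\phi(f)=\bigvee_j\phi(h_j^K)$, and since $\phi(h_j^K)$ is nonincreasing in $K$ and there are finitely many indices, some $j^\ast$ realizes the maximum for all large $K$ (necessarily $\mu(A_{j^\ast})>0$, as $\phi$ is finite on $L^\infty$). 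Then $\phi(f)=\phi(h_{j^\ast}^K)\le t_{j^\ast}+\phi\big((-K-t_{j^\ast})1_{A_{j^\ast}^c}\big)$ for all large $K$; letting $K\to\infty$ gives $\phi(f)\le t_{j^\ast}+J_{A_{j^\ast}}=t_{j^\ast}-\essinf_{A_{j^\ast}}I\le\esssup_X(f-I)+\varepsilon$ (using $f\ge t_{j^\ast}-\varepsilon$ on $A_{j^\ast}$), and $\varepsilon\downarrow0$ closes it. The uniqueness then falls out of $(5)$: for every $f\in L^\infty$, $f-\phi(f)=f-\esssup_X(f-I)\le I$ a.s., so $\I\le I$; testing with $f=n\wedge I\in L^\infty$, where $\phi(f)=\esssup_X((n\wedge I)-I)=0$, gives $f-\phi(f)=n\wedge I\uparrow I$, so $\I\ge I$; hence $I=\I$ a.s. The cycle is arranged so that $(3)$, $(4)$ and $(6)$ each yield $(5)$ with the same $I$, so the uniqueness statement applies throughout.

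It remains to handle $(5)\Leftrightarrow(6)$ assuming $\phi$ is maxitive on $L^0$. Here $(6)\Rightarrow(5)$ is trivial since $L^\infty\subseteq L^\phi$. For $(5)\Rightarrow(6)$: the inequality $\phi(f)\ge{\rm ML}_I(f)$ holds for every $f\in L^0$ just by monotonicity and truncation, $\phi(f)\ge\phi(f\wedge n)={\rm ML}_I(f\wedge n)\uparrow{\rm ML}_I(f)$. For the converse on $f\in L^\phi$, pick $t>1$ with $\phi(tf)<\infty$; convexity gives $\phi(f)\le\tfrac1t\phi(tf)<\infty$, and applying the lower bound to $tf$ yields $tf-I\le C$ a.s.\ with $C:=\esssup_X(tf-I)\le\phi(tf)<\infty$. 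Using maxitivity on $L^0$, $\phi(f\vee(-K))=\phi(f\wedge n)\vee\phi(g_{n,K})$ where $g_{n,K}:=f1_{\{f>n\}}-K1_{\{f\le n\}}$, and the bound $tf\le I+C$ forces $g_{n,K}-I\le C-(t-1)n$ on $\{f>n\}$ and $g_{n,K}-I\le-K$ on $\{f\le n\}$, i.e.\ $\esssup_X(g_{n,K}-I)\le\max\{C-(t-1)n,-K\}$. The main obstacle is to convert this control of $\esssup_X(g_{n,K}-I)$ into a bound on $\phi(g_{n,K})$ itself, and thereby push $\phi(f\vee(-K))$ down to ${\rm ML}_I(f)$: the truncations involved need not lie in $L^\infty$, so the $L^\infty$-representation cannot be invoked directly, and this step must be run carefully — for instance by iterating the decomposition, or by bounding $g_{n,K}$ from above by $L^\infty$-functions adapted to the level sets $\{I\le m\}$ and then using continuity from below on $L^\infty$. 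All remaining points are routine manipulations of essential suprema, for which the facts collected in Section~\ref{sec:A1} of the Appendix suffice.
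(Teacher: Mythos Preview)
Your treatment of the equivalence of $(1)$--$(5)$ is correct and essentially matches the paper's: the same use of $\vee$-stability of the acceptance set to get an increasing sequence realising $\I$, the same dual-representation/Proposition~\ref{prop:intSup} passage for $(3)\Rightarrow(5)$, and the same partition-and-maxitivity argument for $(4)\Rightarrow(5)$ (the paper phrases the latter via simple-function approximation plus Lipschitz continuity, but your pigeonhole selection of $j^\ast$ is an equally valid variant). Your direct uniqueness argument, testing with $f=n\wedge I$, is a nice alternative to the paper's Lemma~\ref{lem:IJ}.

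The genuine gap is in $(5)\Rightarrow(6)$, and it is exactly where you flag it. First a minor point: your lower bound ``$\phi(f)\ge\phi(f\wedge n)={\rm ML}_I(f\wedge n)$'' invokes $(5)$ on $f\wedge n$, which need not lie in $L^\infty$; this is easily repaired either by reusing your own $(4)\Rightarrow(5)$ lower-bound argument (the sets $B=\{f-I>c\}$ work verbatim for unbounded $f$, since $f\ge c$ on $B$), or by the paper's route $(-m)\vee\phi(f)=\phi((-m)\vee f)\ge\phi(f_{-m,n})={\rm ML}_I(f_{-m,n})$, using maxitivity on $L^0$.

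The substantive issue is the upper bound. Your instinct to control $\phi(g_{n,K})$ through $\esssup_X(g_{n,K}-I)$ cannot work directly, because the representation is only available on $L^\infty$. The paper bypasses the representation entirely for the tail term: it bounds $\phi\big(f1_{\{f\ge m\}}-h1_{\{f<m\}}\big)$ using \emph{only} monotonicity and translation invariance, by comparing with $t(f-m)$. Concretely, for any $h\in L^0_+$ with $-h\le t(f-m)$ one has
\[
(f-m)1_{\{f\ge m\}}-(h+m)1_{\{f<m\}}\ \le\ t(f-m)1_{\{f\ge m\}}-h1_{\{f<m\}}\ \le\ t(f-m),
\]
so $\phi\big(f1_{\{f\ge m\}}-h1_{\{f<m\}}\big)\le(1-t)m+\phi(tf)$, which tends to $-\infty$ as $m\to\infty$ because $t>1$ and $\phi(tf)<\infty$. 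This is the missing idea: the hypothesis $f\in L^\phi$ is used not to feed an $\esssup$ bound back into $\phi$, but to dominate the unbounded tail by a translate of $tf$ and hence control $\phi$ of the tail \emph{directly}. With this tail estimate in hand, the maxitive decomposition $\phi((-m)\vee f)\le\phi(f_{-m,n})\vee\phi(\text{tail}_n)$ together with $(5)$ on the $L^\infty$-piece $f_{-m,n}$ finishes the upper bound after letting $n\to\infty$ and then $m\to\infty$.
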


\begin{remark}
The equivalence \((1) \Leftrightarrow (5)\) establishes that a monetary risk measure on \(L^\infty\) is maxitive and continuous from below if and only if it can be represented as a penalized maximum loss. 
Furthermore, the implication \((1) \Rightarrow (6)\)  shows that, under the additional requirement of maxitivity on the entire domain $L^0$, the penalized maximum loss representation extends to functions that are not necessarily bounded. 
\end{remark}

\begin{remark}
Theorem~\ref{thm:main1} is  related to Theorem~2.1 in \cite{kupperzapata}, as well as its variants Theorems~3.1 and~4.1 in the same reference, and Theorem~4.7 in \cite{zapata}. All of these results establish conditions under which a monetary risk measure \(\phi\), defined on the space \(C_b(S)\) of bounded continuous functions over a topological space \(S\), admits a representation of the form
\[
\phi(f) = \sup_{x \in S} \big(f(x) - I(x)\big) \quad \text{for all } f \in C_b(S).
\]
The proofs of these results fundamentally rely on the topological structure of \(S\),  exploiting topological properties such as continuity, compactness, and different types of separation.

In contrast, Theorem~\ref{thm:main1} provides an analogous representation  \eqref{eq:repLinfty}   without invoking any topological assumptions. It only requires measurability, and is formulated in the general setting of a measurable space, thereby extending the scope of representation results beyond a  topological framework. 
\end{remark}

The following variant of Theorem \ref{thm:main1} addresses the law-invariant case.

\begin{theorem}\label{thm:main2}
The following conditions are equivalent:
\begin{enumerate}
    \item \( J_A = 0 \) whenever \(\mu(A) > 0\).
    \item \(\phi(f) = {\rm ML}(f)\) for all \(f \in L^\infty\).
\end{enumerate}
If \(\phi\) is maxitive on $L^0$, then the conditions \textup{(1)}--\textup{(2)} are also equivalent to:
\begin{enumerate}\setcounter{enumi}{2}
    \item \(\phi(f) = {\rm ML}(f)\) for all \(f \in L^\phi\).
\end{enumerate}
If \(\mu\) is atomless and \(L^2\) is separable, then conditions \textup{(1)}--\textup{(2)} are also equivalent to:
\begin{enumerate}\setcounter{enumi}{3}
    \item \(\phi|_{L^\infty}\) is maxitive and law-invariant on \(L^\infty\).
\end{enumerate}
\end{theorem}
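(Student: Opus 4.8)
The plan is to handle the three equivalences in increasing order of difficulty: $(1)\Leftrightarrow(2)$ by a direct computation with the concentration $J$, $(2)\Leftrightarrow(3)$ by specializing Theorem~\ref{thm:main1} to the trivial penalty $I\equiv 0$, and $(2)\Leftrightarrow(4)$ as the substantive part. Throughout, (M), (N), (T) already give $\phi(f)\le\esssup_X f$ for $f\in L^\infty$, so in each implication toward ``$\phi={\rm ML}$'' only the reverse bound is at stake.

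For $(1)\Leftrightarrow(2)$: assuming (1), I would fix $f\in L^\infty$, set $m=\esssup_X f$, take $A=\{f>m-\varepsilon\}$ (which has $\mu(A)>0$), and observe that $f\ge (m-\varepsilon)+r1_{A^c}$ pointwise for all sufficiently negative $r$; then (M), (T) and the definition of $J_A$ give $\phi(f)\ge m-\varepsilon+J_A=m-\varepsilon$, hence $\phi(f)\ge m$. For the converse, if $\phi={\rm ML}$ then for $\mu(A)>0$ and $r<0$ the function $r1_{A^c}$ attains the value $0$ on the positive-measure set $A$, so $\phi(r1_{A^c})=\esssup_X(r1_{A^c})=0$ and hence $J_A=0$. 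Next, $(2)\Leftrightarrow(3)$ under maxitivity of $\phi$ on $L^0$: the direction $(3)\Rightarrow(2)$ is trivial since $L^\infty\subset L^\phi$; conversely (2) is precisely condition (5) of Theorem~\ref{thm:main1} with penalty $I\equiv 0$, so the ``moreover'' clause there forces $\I=0$ a.s., and condition (6) of that theorem (available because $\phi$ is maxitive on $L^0$) then reads $\phi(f)={\rm ML}_{\I}(f)={\rm ML}(f)$ for all $f\in L^\phi$.

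The heart of the matter is $(4)\Rightarrow(2)$, the direction $(2)\Rightarrow(4)$ being immediate because ${\rm ML}$ is both maxitive and law-invariant. Here is the argument I would use. Fix $f\in L^\infty$ and $\varepsilon>0$; put $a=\esssup_X f-\varepsilon$, $b=\essinf_X f$, assume $b\le a$ (otherwise $\phi(f)\ge\phi(a)=a$ directly), and let $p=\mu(f>a)>0$. Define $h(q):=\phi(a1_A+b1_{A^c})$ for any $A\in\mathcal{X}$ with $\mu(A)=q$; law-invariance makes $h$ well-defined on $[0,1]$, choosing nested sets makes $h$ non-decreasing, and $h(1)=\phi(a)=a$. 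The key observation is that, since $\mu$ is atomless, a set of measure $1$ splits into $n$ disjoint sets $B_1,\dots,B_n$ each of measure $1/n$, and the pointwise maximum $\bigvee_{i=1}^n(a1_{B_i}+b1_{B_i^c})$ equals $a1_X$; maxitivity together with law-invariance then give $a=\phi(a1_X)=\bigvee_{i}\phi(a1_{B_i}+b1_{B_i^c})=h(1/n)$. Hence $h(1/n)=a$ for every $n$, and monotonicity upgrades this to $h\equiv a$ on $(0,1]$, in particular $h(p)=a$. Since $f\ge a1_{\{f>a\}}+b1_{\{f>a\}^c}$ a.s., (M) yields $\phi(f)\ge h(p)=\esssup_X f-\varepsilon$; letting $\varepsilon\downarrow 0$ completes the proof.

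I expect the only genuinely non-formal step to be this last one: converting \emph{finite} (rather than $\sigma$-) maxitivity, plus law-invariance, into the rigidity $\phi={\rm ML}$. The mechanism is the self-similarity of two-point distributions under the operation ``maximum of $n$ copies'' on an atomless space, which pins down the function $q\mapsto h(q)$ of the mass placed on the top value to be constant on $(0,1]$. Incidentally, this argument uses only atomlessness of $\mu$; separability of $L^2$ is not needed for it and may be read in the statement as aligning with the standard framework for law-invariant risk measures.
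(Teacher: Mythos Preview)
Your arguments for $(1)\Leftrightarrow(2)$ and $(2)\Leftrightarrow(3)$ are correct and close in spirit to the paper's, which routes these through Lemma~\ref{lem:IJ} and Theorem~\ref{thm:main1} rather than the level-set computation you give, but the substance is the same.

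The implication $(4)\Rightarrow(2)$ is where you diverge, and your argument is genuinely different from the paper's. The paper invokes the Jouini--Schachermayer--Touzi automatic Fatou property: under atomlessness and separability of $L^2$, any law-invariant convex monetary risk measure on $L^\infty$ is continuous from below; Theorem~\ref{thm:main1} then yields $\phi(f)=\esssup_X(f-\I)$, and a contradiction argument using two equally-measured sets in $\{\I>q_2\}$ and $\{\I<q_1\}$ forces $\I=0$ a.s. Your route bypasses all of this. By splitting $X$ into $n$ pieces of equal measure and using $\bigvee_i(a1_{B_i}+b1_{B_i^c})=a$, you extract $h(1/n)=a$ directly from \emph{finite} maxitivity and law-invariance, then monotonicity pins $h\equiv a$ on $(0,1]$. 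This is elementary, avoids the external automatic-continuity result, and---as you correctly observe---does not use separability of $L^2$ at all; only atomlessness is needed. What the paper's approach buys is that it fits into the structural picture already built around $\I$ and Theorem~\ref{thm:main1}; what your approach buys is a self-contained, sharper result with one hypothesis dropped.
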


\begin{remark}
Unlike condition \textup{(4)} in Theorem \ref{thm:main1}, the assumption that \(\phi\) is maxitive is not required for condition \textup{(1)} in Theorem \ref{thm:main2}.
\end{remark}

\begin{remark}\label{rem:lawinvariant}
Theorem \ref{thm:main2} establishes that any maxitive, law-invariant monetary risk measure on $L^\infty$ must necessarily be the maximum loss \({\rm ML}\), provided that the probability measure \(\mu\) is atomless and the space \(L^2\) is separable.
\end{remark}

\begin{remark}
We point out that Theorem \ref{thm:main2} provides simple  criteria to identify when a seemingly complex risk measure is, in fact, simply the maximum loss. This is further illustrated in \textup{(2)} and \textup{(3)} in Examples \ref{examples}.
\end{remark}

Our next main result reads as follows.

\begin{theorem}\label{thm:main3}
The following conditions are equivalent:
\begin{enumerate}
    \item \(\phi\) is maxitive and continuous from below on $L^0$.
    \item \(\phi\) is \(\sigma\)-maxitive on $L^0$.
    \item There exists a measurable function \(I \colon X \to [0, \infty]\) such that 
    \begin{equation}
    \phi(f) = {\rm ML}_I(f) \quad \text{for all } f \in L^0.
    \end{equation}
\end{enumerate}
In this case, \(I = \I\) a.s.
\end{theorem}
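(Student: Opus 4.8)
The plan is to leverage Theorem \ref{thm:main1} to reduce everything to statements about $L^\infty$, then bootstrap from $L^\infty$ to all of $L^0$ using the $\sigma$-maxitivity and continuity from below. The implication $(3)\Rightarrow(2)$ is the easy direction: if $\phi={\rm ML}_I$ on $L^0$, then for any sequence $(f_n)$ with $\sup_n f_n\in L^0$ one has $\esssup_X(\sup_n f_n - I)=\sup_n \esssup_X(f_n-I)$ by the standard interchange of essential suprema (from the Appendix, Section \ref{sec:A1}); this gives $\sigma$-maxitivity, and the normalization, monotonicity and translation invariance of ${\rm ML}_I$ are immediate, with $\essinf_X I=0$ forcing $\phi(0)=0$. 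Similarly $(2)\Rightarrow(1)$ follows since $\sigma$-maxitivity on $L^0$ trivially implies both maxitivity on $L^0$ and continuity from below (given $f_n\uparrow f$ a.s. in $L^0$, apply $\sigma$-maxitivity to get $\phi(f)\le\sup_n\phi(f_n)$, while the reverse inequality is monotonicity).

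The substantive direction is $(1)\Rightarrow(3)$. Assuming $\phi$ is maxitive and continuous from below on $L^0$, its restriction $\phi|_{L^\infty}$ is maxitive and continuous from below on $L^\infty$, so Theorem \ref{thm:main1}, equivalence $(1)\Leftrightarrow(5)$, supplies a measurable $I\colon X\to[0,\infty]$ with $\phi(f)={\rm ML}_I(f)$ for all $f\in L^\infty$; moreover $I=\I$ a.s. by the final clause of Theorem \ref{thm:main1}. The first step is to extend this representation to nonnegative $f\in L^0$: write $f=\sup_n (f\wedge n)$, with $f\wedge n\in L^\infty$ and $f\wedge n\uparrow f$ a.s.; then continuity from below gives $\phi(f)=\sup_n\phi(f\wedge n)=\sup_n\esssup_X(f\wedge n-I)$, and the monotone interchange of essential suprema identifies this with $\esssup_X(f-I)={\rm ML}_I(f)$. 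The second step handles general $f\in L^0$: decompose $f=f^+ -f^-$, or more directly write $f=\sup_n(f\vee(-n))$ and apply the same argument, now using that $f\vee(-n)\downarrow$... — actually here one should instead note $f = \sup_n (f\wedge n)$ still works verbatim for signed $f$ since $f\wedge n\uparrow f$ a.s. regardless of sign, and $f\wedge n\in L^\infty$ precisely when... no: $f\wedge n$ need not be bounded below. So the cleaner route is: for arbitrary $f\in L^0$, set $g_n:=(f\vee(-n))\wedge n\in L^\infty$; these do not increase to $f$ in general, so instead use $g_n := f\wedge n$ which satisfies $g_n\uparrow f$ but lies only in the sublattice of functions bounded above. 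One then runs the $L^\infty$ argument on the truncations $(-n)\vee f\wedge n \uparrow$ fails; the correct monotone approximation is $f_n := f\wedge n \uparrow f$, and since $\phi$ is maxitive on all of $L^0$ one may apply the already-established $L^\infty$ representation together with the definition of ${\rm ML}_I$ on the extended domain. The key point I would record carefully is that $\esssup_X(f-I)$ is well-defined in $[-\infty,\infty]$ for every $f\in L^0$ and that $\esssup_X(\sup_n h_n - I)=\sup_n\esssup_X(h_n-I)$ for any increasing sequence $h_n$, which is exactly the continuity-from-below identity matched against $\phi$.

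The main obstacle is the bookkeeping around unbounded functions that are not bounded below: the truncations $f\wedge n$ that increase to $f$ are not in $L^\infty$, so one cannot directly invoke the $L^\infty$ representation on them. I expect the resolution to go through $L^\phi$ or through a double truncation argument — first observing that for $f\in L^0$ with $\phi(f)<\infty$ one has enough integrability to approximate, and separately noting that if $\phi(f)=\infty$ then $\esssup_X(f-I)=\infty$ as well (since otherwise $f$ would be dominated by a bounded-above function with finite ${\rm ML}_I$, contradicting continuity from below). Concretely: apply continuity from below to $f\wedge n \uparrow f$; each $\phi(f\wedge n)$ can be computed by approximating $f\wedge n$ from below by $(-m)\vee(f\wedge n)\in L^\infty$ and using continuity from below again, so $\phi(f\wedge n)=\esssup_X(f\wedge n - I)$; then let $n\to\infty$. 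This two-stage monotone approximation, combined with the uniqueness $I=\I$ a.s. inherited from Theorem \ref{thm:main1}, completes the proof.
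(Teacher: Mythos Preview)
Your proposal is correct and, once the exploratory detours are pruned, takes essentially the same route as the paper: invoke Theorem~\ref{thm:main1} on $L^\infty$ and then extend by a two-stage monotone truncation. The only cosmetic difference is the ordering of the truncations: the paper first treats functions bounded \emph{below} (approximating by $f\wedge n\in L^\infty$ and using continuity from below), and then handles general $f$ via the maxitivity identity $\phi((-n)\vee f)=(-n)\vee\phi(f)$ together with ${\rm ML}_I((-n)\vee f)=(-n)\vee{\rm ML}_I(f)$, letting $n\to\infty$; you instead first treat functions bounded \emph{above} (approximating $f\wedge n$ from below by $(-m)\vee(f\wedge n)\in L^\infty$) and then let $n\to\infty$ using continuity from below again. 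Both schemes work and are equally short; the paper's version has the mild advantage that the second step exploits maxitivity directly rather than a second limit, which makes the passage to $\phi(f)=-\infty$ slightly cleaner.
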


\begin{remark}
Theorem \ref{thm:main3} fully characterizes the class of $\sigma$-maxitive monetary risk measures on \(L^0\). In particular, we have that if in (6) of Theorem \ref{thm:main1} we strengthen the condition of maxitivity and assume that $\phi$ is
$\sigma$-maxitive, the penalized loss representation is valid on the whole domain $L^0$.
\end{remark}

We have shown that a maxitive monetary risk measure $\phi$ is a penalized maximum loss whenever $\phi$ is continuous from below. 
However, establishing the continuity from below for a monetary risk measure may   be challenging. The following result offers local conditions for determining when a maxitive risk measure can be represented as a penalized maximum loss.

\begin{theorem}\label{thm:main4}
Suppose that \(\phi\) is maxitive on $L^0$, and let \(I \colon X \to [0, \infty]\) be a measurable function. Assume there exists a countable family \((A_k)_{k \in \mathbb{N}} \subset \mathcal{X}\) satisfying the following conditions:
\begin{enumerate}
    \item \(\{I < \infty\} = \bigcup_{k \in \mathbb{N}} A_k\) a.s.
    \item \(J_{A_k^c} \to -\infty\) as \(k \to \infty\).
    \item For each \(k \in \mathbb{N}\), \(J_B = -\essinf_B I\) for all \(B \in \mathcal{X}\) with \(B \subset A_k\).
\end{enumerate}
Then, the following holds
\begin{equation}
\phi(f) = {\rm ML}_I(f) \quad \text{for all } f \in L^\phi.
\end{equation}
Moreover, in this case, \(I = \I\) a.s.
\end{theorem}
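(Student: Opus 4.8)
\smallskip
\noindent\textbf{Proof plan.}
The plan is to deduce the statement from Theorem~\ref{thm:main1} by upgrading the local identity in hypothesis~(3) to the global identity
\[
J_A=-\essinf_A I\qquad\text{for every }A\in\XX .
\]
Once this is available, hypothesis~(4) of Theorem~\ref{thm:main1} holds --- maxitivity of $\phi$ on $L^0$ forces maxitivity on $L^\infty$ --- so the equivalence \((4)\Leftrightarrow(5)\) there gives $\phi={\rm ML}_I$ on $L^\infty$; and since $\phi$ is maxitive on all of $L^0$, the implication \((1)\Rightarrow(6)\) of Theorem~\ref{thm:main1} extends this to $\phi(f)={\rm ML}_I(f)$ for every $f\in L^\phi$, while the last clause of that theorem yields $I=\I$ a.s.

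Two properties of the concentration will carry the argument. \emph{(i) Monotonicity}: if $A\subset A'$ then $r1_{A^c}\le r1_{A'^c}$ for $r<0$, hence $J_A\le J_{A'}$ by~(M). \emph{(ii) $J$ sends a finite union to the corresponding supremum}: $J_{B_1\cup B_2}=J_{B_1}\vee J_{B_2}$. For~(ii), for $r<0$ multiplication by the nonpositive number $r$ turns $B_1^c\cap B_2^c$ into a pointwise supremum, so $r1_{(B_1\cup B_2)^c}=r1_{B_1^c}\vee r1_{B_2^c}$; maxitivity gives $\phi(r1_{(B_1\cup B_2)^c})=\phi(r1_{B_1^c})\vee\phi(r1_{B_2^c})$, and taking the infimum over $r<0$ --- each $r\mapsto\phi(r1_{B_i^c})$ being nondecreasing on $(-\infty,0)$ by~(M), so that the infimum of the maximum equals the maximum of the infima --- gives the claim. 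I will also use the routine facts that $\essinf_{\bigcup_k B_k}I=\bigwedge_k\essinf_{B_k}I$ for finite and countable unions, and that $\essinf_B I=\infty$ when $\mu(B)=0$.

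The global identity is then proved in two steps. First I reduce to sets contained in $\{I<\infty\}$: for arbitrary $A$, property~(ii) gives $J_A=J_{A\cap\{I<\infty\}}\vee J_{A\cap\{I=\infty\}}$ while $\essinf_A I=\essinf_{A\cap\{I<\infty\}}I$; and since $\{I=\infty\}=\bigcap_k A_k^c$ a.s.\ by hypothesis~(1), the set $A\cap\{I=\infty\}$ lies in every $A_k^c$, so monotonicity of $J$ together with hypothesis~(2) forces $J_{A\cap\{I=\infty\}}=-\infty$. Thus it suffices to treat $A\subset\{I<\infty\}$ a.s. For such $A$, if $\mu(A)=0$ both sides equal $-\infty$; otherwise $\essinf_A I<\infty$, since $A$ has positive measure and $\{I<\infty\}=\bigcup_n\{I\le n\}$. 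For the lower bound, monotonicity and hypothesis~(3) give $J_A\ge J_{A\cap A_k}=-\essinf_{A\cap A_k}I$ for every $k$, and since $\bigcup_k(A\cap A_k)=A$, taking the supremum over $k$ and using the essinf identity gives $J_A\ge-\essinf_A I$. For the upper bound, use hypothesis~(2) to choose a single index $k$ with $J_{A_k^c}<-\essinf_A I$; applying~(ii) to the decomposition $A=(A\cap A_k)\cup(A\cap A_k^c)$ yields
\[
J_A=J_{A\cap A_k}\vee J_{A\cap A_k^c}\le(-\essinf_A I)\vee J_{A_k^c}=-\essinf_A I ,
\]
because $J_{A\cap A_k}=-\essinf_{A\cap A_k}I\le-\essinf_A I$ (the essential infimum over a smaller set is larger) and $J_{A\cap A_k^c}\le J_{A_k^c}<-\essinf_A I$ by monotonicity. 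Combining the two bounds gives the global identity.

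The step that needs the right idea, rather than the routine manipulations above, is making hypotheses~(2) and~(3) --- which concern unrelated sets --- cooperate when $\phi$ is only assumed maxitive, so that no limit may be passed through $\phi$ and neither $\sigma$-maxitivity nor continuity from below is available. The resolution is precisely the finite-union behaviour~(ii) of the concentration: a single algebraic decomposition of $A$ against one set $A_k$ localizes hypothesis~(3) to the finite-penalty part $A\cap A_k$ while hypothesis~(2) annihilates the tail $A\cap A_k^c$; because this is one decomposition and not a limiting procedure, maxitivity alone suffices, and the $k$-independence of $J_A$ closes the loop.
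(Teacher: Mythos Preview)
Your proposal is correct. Both you and the paper reduce the theorem to Theorem~\ref{thm:main1} by promoting the local identity of hypothesis~(3) to the global one $J_A=-\essinf_A I$ for all $A\in\XX$, and the upper-bound halves are essentially the same argument (decompose against a single $A_k$, use finite maxitivity of $J$, then let $J_{A_k^c}\to-\infty$). The difference lies in the lower bound: the paper introduces the auxiliary functions $I_k:=I1_{A_k}+\infty 1_{A_k^c}$, feeds the inequality $J_B\ge -\essinf_B I_k$ into Lemma~\ref{lem:IJ} to first prove $\I=I$ a.s., and only then extracts $J_B\ge-\essinf_B I$ from the relation~\eqref{eq:relationIJ}. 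Your route is more direct: monotonicity of $J$ and the countable-union identity for essential infima give $J_A\ge\sup_k(-\essinf_{A\cap A_k}I)=-\essinf_A I$ immediately, bypassing Lemma~\ref{lem:IJ} altogether, with $I=\I$ then falling out of the final clause of Theorem~\ref{thm:main1}. Your argument is shorter and more self-contained; the paper's detour through $\I$ has the minor expository benefit of making the identification $I=\I$ explicit before the representation is invoked.
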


Below, we present some examples of maxitive risk measures to which the theorems in this section are applicable.

\begin{examples}\label{examples}
\begin{enumerate}

\item   
A function \(\mathcal{E} \colon L^0_+ \to [0, \infty]\) is called a \emph{sublinear expectation} if it satisfies the following properties:
\begin{enumerate}
    \item \(\mathcal{E}(a) = a\) for every constant \(a \geq 0\).
    \item \(\mathcal{E}(f) \leq \mathcal{E}(g)\) whenever \(f \leq g\) a.s.
    \item \(\mathcal{E}(f + g) \leq \mathcal{E}(f) + \mathcal{E}(g)\).
    \item \(\mathcal{E}(af) = a \mathcal{E}(f)\) for all constants \(a \geq 0\).
\end{enumerate}

Given a sequence \((\mathcal{E}_n)_{n \in \mathbb{N}}\) of sublinear expectations, the monetary risk measure \(\phi \colon L^0 \to [-\infty, \infty]\) defined by
\[
\phi(f) := \limsup_{n \to \infty} \tfrac{1}{n} \log \mathcal{E}_n(e^{nf}),
\]
is maxitive. To see this, consider \(f, g \in L^0\). Applying the \emph{principle of the largest term}\footnote{The \emph{principle of the largest term} is a result frequently used in large deviation theory. It states that for any two \([0, \infty]\)-valued sequences \((a_n)_{n \in \mathbb{N}}\) and \((b_n)_{n \in \mathbb{N}}\), we have \(\limsup_{n \to \infty} \tfrac{1}{n} \log(a_n + b_n) = \left(\limsup_{n \to \infty} \tfrac{1}{n} \log(a_n)\right) \vee \left(\limsup_{n \to \infty} \tfrac{1}{n} \log(b_n)\right)\), see~\cite[Lemma 1.2.15]{dembo}.}, we get
\begin{align*}
\phi(f \vee g) &\le \limsup_{n \to \infty} \tfrac{1}{n} \log \mathcal{E}_n(e^{n(f \vee g)}) \\
&\le \limsup_{n \to \infty} \tfrac{1}{n} \log \mathcal{E}_n(e^{nf} + e^{ng}) \\
&\le \limsup_{n \to \infty} \tfrac{1}{n} \log \left(\mathcal{E}_n(e^{nf}) + \mathcal{E}_n(e^{ng})\right) \\
&= \left(\limsup_{n \to \infty} \tfrac{1}{n} \log \mathcal{E}_n(e^{nf})\right) \vee \left(\limsup_{n \to \infty} \tfrac{1}{n} \log \mathcal{E}_n(e^{ng})\right) \\
&= \phi(f) \vee \phi(g).
\end{align*}

\item  
The two most popular monetary risk measures in financial practice are Value at Risk (VaR) and Average Value at Risk (AVaR).  
The monetary risk measure VaR at level $\alpha \in (0,1)$ is the functional ${\rm VaR}_{\mu,\alpha}\colon L^\infty \to \mathbb{R}$ defined by
\[
{\rm VaR}_{\mu,\alpha}(f) := q_{\mu,\alpha}(f) = \inf\left\{m \in \mathbb{R} \colon \mu(f \le m) \ge \alpha \right\},
\]
where $q_{\mu,\alpha}$ is the left $\alpha$-quantile of the distribution of $f$ under $\mu$. 
In addition, the monetary risk measure AVaR at level $\alpha \in (0,1)$ is the functional ${\rm AVaR}_{\mu,\alpha}\colon L^\infty \to \mathbb{R}$ defined by
\[
{\rm AVaR}_{\mu,\alpha}(f) := \frac{1}{1-\alpha} \int_{\alpha}^1 {\rm VaR}_{\mu,s}(f) \, {\rm d}s.
\]
We can use Theorem \ref{thm:main2} to study the asymptotic behavior of AVaR.  
Suppose that the sequences $(\nu_n)_{n\in\mathbb{N}} \subset \mathcal{M}_1$ and $(\alpha_n)_{n\in\mathbb{N}} \subset (0,1)$ satisfy
\begin{equation}\label{eq:asympCond} 
\limsup_{n\to\infty} \frac{\nu_n(A)}{1-\alpha_n} \ge 1 \quad \text{for all } A \in \mathcal{X} \text{ with } \mu(A) > 0.
\end{equation}
Then, we claim that
\begin{equation}\label{eq:AVAR}
\limsup_{n\to\infty} {\rm AVaR}_{\nu_n,\alpha_n}(f) = \operatorname{ess\,sup}_X f
\quad \text{for all } f \in L^\infty.
\end{equation}
Notice that the condition \eqref{eq:asympCond} is satisfied if $\nu_n = \mu$ for all $n \in \mathbb{N}$ and $\alpha_n \uparrow 1$.  

To show \eqref{eq:AVAR}, consider the monetary risk measure
\[
\phi(f) := \limsup_{n\to\infty} {\rm AVaR}_{\nu_n,\alpha_n}(f).
\] 
Fix $r < 0$ and $A \in \mathcal{X}$ with $\mu(A) > 0$. We have
\[
\nu_n(r 1_{A^c} \le m) = \left(1 - \nu_n(A)\right) 1_{[r,0)}(m) + 1_{[0,\infty)}(m).
\]
Then, for $s \in (0,1)$, it holds that
\[
{\rm VaR}_{\nu_n,s}(r 1_{A^c}) = r 1_{(0,1-\nu_n(A)]}(s).
\]
Thus,
\[
{\rm AVaR}_{\nu_n,\alpha_n}(r 1_{A^c}) = \frac{r}{1-\alpha_n} \int_{\alpha_n}^1  1_{(0,1-\nu_n(A)]}(s) \, {\rm d}s
= r \left|1 - \frac{\nu_n(A)}{1-\alpha_n}\right|^+.
\]
Therefore, we get
\[
\phi(r 1_{A^c}) = \limsup_{n\to\infty} {\rm AVaR}_{\nu_n,\alpha_n}(r 1_{A^c}) = r \liminf_{n\to\infty} \left|1 - \frac{\nu_n(A)}{1-\alpha_n}\right|^+ = 0,
\]
where the last equality follows from \eqref{eq:asympCond}.  
Consequently,  
\[
{J}_A := \inf_{r < 0} \phi(r1_{A^c}) = 0.
\]
Since the equality above holds for every $A \in \mathcal{X}$, 
by Theorem~\ref{thm:main2} it follows that
\[
\phi(f) = \operatorname{ess\,sup}_X f \quad \text{for all } f \in L^\infty,
\]
as required.

\item  A   distortion is a non-decreasing function $g\colon[0,1] \to [0,1]$ with $g(0) = 0$ and $g(1) = 1$. 
The \emph{distorted expectation} $\mathcal{E}_{\mu,g}\colon L^0_+ \to [0,\infty]$ associated with the   distortion $g$ is defined by
\[
\mathcal{E}_{\mu,g}(f) = \int_0^{\infty} g\circ\mu(f > x) {\rm d}x. 
\]
If $g$ is concave, then $\mathcal{E}_{\mu,g}$ is a sublinear expectation, see e.g.~\cite{wirch}.  
Suppose that $\mu$ is atomless and $L^2$ is separable. We claim that, for every sequence $(g_n)_{n\in\mathbb{N}}$ of concave distortions, it holds
\begin{equation}\label{eq:LPexpLawInv}
 \lim_{n\to\infty} \tfrac{1}{n} \log \mathcal{E}_{\mu,g_n}(e^{n f}) = \esssup_X f \quad \text{for all } f \in L^\infty.
\end{equation}
To prove this, we consider the monetary risk measure 
\[
\phi(f) := \limsup_{n\to\infty} \tfrac{1}{n} \log \mathcal{E}_{\mu,g_n}(e^{n f}),
\] 
which law invariant and maxitive (see Example (1) above). 
As a consequence of Theorem~\ref{thm:main3}, we have 
\[
\phi(f) = \esssup_X f \quad \text{for all } f \in L^\infty.
\] 
To obtain \eqref{eq:LPexpLawInv} we show that the limit superior in $\phi(f)$ is a limit for all $f \in L^\infty$. Suppose, by contradiction, that the limit in $\phi(f)$ does not exist for some $f \in L^\infty$. In that case, we can take a subsequence $n_1 < n_2 < \cdots$ in $\mathbb{N}$ such that
\begin{equation}\label{eq:subsequence}
 \lim_{k\to\infty} \tfrac{1}{n_k} \log \mathcal{E}_{\mu,g_{n_k}}(e^{n_k f}) < \phi(f)=\esssup_X f.
\end{equation}
Then, the monetary risk measure $\psi\colon L^\infty\to\mathbb{R}$ defined by
\[
\psi(h) := \limsup_{k\to\infty} \tfrac{1}{n_k} \log \mathcal{E}_{\mu,g_{n_k}}(e^{n_k h})
\]
is maxitive and law invariant. Applying Theorem~\ref{thm:main3} again, we obtain $\psi(f) = \esssup_X f$, which contradicts \eqref{eq:subsequence}.

\end{enumerate}
\end{examples}

\begin{remark}

Theorem~\ref{thm:main2} shows that the maximum loss \({\rm ML}\) is the unique example of a maxitive monetary risk measure that is law-invariant (provided that $\mu$ is atomless and $L^2$ is separable). In addition, as discussed in \cite{jouini}, the assumption that the probability space is atomless is common in financial modeling, where asset prices are typically modeled as continuous variables. In fact, the probability space \((X, \mathcal{X}, \mu)\) is atomless if and only if there exists a random variable \(\xi\) on \((X, \mathcal{X}, \mu)\) with a continuous distribution; see~\cite{delbaen}. Moreover, the condition that \(L^2\) is separable is not restrictive in practice, as it holds whenever we are dealing with countably many random variables, which is typical in financial applications; see \cite{jouini} for further discussion.

These considerations suggest that, in practical matters, the maximum loss is the only law-invariant monetary risk measure that is also maxitive. This rules out other popular law-invariant measures, such as \emph{Value at Risk} (\({\rm VaR}\)). While this might seem limiting, it is conceptually quite natural, as worst-case  risk evaluations are inherently sensitive to specific scenarios \(x \in X\), making them incompatible with law invariance, which depends solely on the distribution of outcomes rather than their  realizations.

However, one can still construct examples of law-invariant monetary risk measures that are maxitive in a weaker sense. This can be achieved by identifying a random variable \(f\) with its quantile function \(q_\alpha(f)\), and ordering them by \emph{usual stochastic dominance}, defined as \(f \le_{\rm st} g\) whenever \(q_\alpha(f) \le q_\alpha(g)\) for all \(\alpha \in (0,1)\) (in other words, replacing states by quantile levels). Then, a monetary risk measure \(\phi\) is said to be maxitive (with respect to \(\le_{\rm st}\)) if
\[
\phi(f \vee_{\rm st} g) \le \phi(f) \vee \phi(g),
\]
where the  \(f \vee_{\rm st} g\) represents a random variable whose quantile function is given  by \(q_\alpha(f \vee_{\rm st} g) = q_\alpha(f) \vee q_\alpha(g)\) for all $\alpha\in(0,1)$. A typical example of such a maxitive monetary risk measure is \({\rm VaR}\). This form of maxitivity has been studied recently in \cite{wang,kupperzapata}.

On the other hand, maxitive monetary risk measures (in the strong sense) offer reasonable  alternatives to traditional VaR. For example, consider the following maxitive analogue of VaR. Let \(X = A \cup B\) be a partition of the sample space, where \(B\) represents a set of stressed scenarios with \(\nu(B) = 1 - \alpha\), for some confidence level \(\alpha \in (0,1)\). The \emph{maxitive VaR} at level \(\alpha\) is defined as
\[
{\rm maxVaR}_\alpha(f) = \esssup_{A} f.
\]
This risk measure corresponds to the penalized maximum loss \({\rm ML}_I\), where the penalty function \(I\) is equal to \(0\) on \(A\) and \(\infty\) on \(A^c\). 

Like traditional VaR, the maxitive VaR estimates the maximum loss within a specified confidence region. However, whereas standard VaR relies on the underlying probability distribution to determine this region, maxitive VaR allows it to be explicitly constructed from selected scenarios. This  enables the design of confidence regions that can better reflect the structure and interdependence of specific risk factors. Also, since maxVaR is maxitive, and thus convex, and positively homogeneous, it is a coherent risk measure. This contrasts with VaR, which does not satisfy coherence. Such risk measures have been studied in detail in \cite{studer}.

\end{remark}

\section{Applications}\label{sec:applications}

{ In this section, by giving a different interpretation to the main results, we explore other applications to large deviations and the limit theory of risk measures.}

\subsection{Sharp large deviation estimates}
In this subsection, let $(\nu_n)_{n\in\N}$ be  a sequence $(\nu_n)_{n\in\N}$ of probability measures on a measurable space $(X,\mathcal{X})$ such that $\nu_n \ll \mu$ for all $n\in\N$ for a given reference probability measure $\mu$ on $\XX$. Given a measurable function \(I \colon \mathcal{X} \to [0, \infty]\), we say that the sequence \((\nu_n)_{n \in \mathbb{N}}\) satisfies the \emph{sharp large deviation principle} (sharp LDP) with rate \(I\) if 
\begin{equation}\label{eq:sharpLDP}
\lim_{n \to \infty} \tfrac{1}{n} \log \nu_n(A) = -\essinf_A I \quad \text{for all } A \in \mathcal{X}.
\end{equation}
In the special case where \(\nu_n\) are the laws of the sample means of a sequence of i.i.d.\ \(d\)-dimensional random variables, Barbe and Broniatowski~\cite{barbe} proved that if the law \(\nu_1\) satisfies certain regularity conditions, then the sequence \((\nu_n)_{n \in \mathbb{N}}\) satisfies a sharp LDP as stated in~\eqref{eq:sharpLDP}. In the following, we apply the results in the previous section to analyze the non-i.i.d.~case  and establish sufficient conditions for the sharp LDP in terms of the Radon–Nikodym derivatives $\frac{{\rm d}\nu_n}{{\rm d}\mu}$.

\begin{remark}
In the particular situation where \(X\) is a topological space, \(\mathcal{X}\) is the Borel \(\sigma\)-algebra \(\mathcal{B}(X)\), and \(I \colon X \to [0, \infty]\) is a lower semicontinuous function, the sequence \((\nu_n)_{n \in \mathbb{N}}\) is said to satisfy the \emph{large deviation principle} (LDP) with rate \(I\) if 
\begin{equation}\label{eq:LDP2}
-\inf_{x \in \operatorname{int}(A)} I(x) \le \liminf_{n \to \infty} \tfrac{1}{n} \log \nu_n(A) \le \limsup_{n \to \infty} \tfrac{1}{n} \log \nu_n(A) \le -\inf_{x \in \operatorname{cl}(A)} I(x)
\end{equation} 
for all \(A \in \mathcal{B}(X)\), 
where \(\operatorname{int}(A)\) and \(\operatorname{cl}(A)\) denote the interior and closure of \(A\), respectively. 
As explained in the introduction, the bounds in \eqref{eq:LDP2} may not be sharp and can sometimes lead to trivial estimates. In contrast, the sharp version of the LDP~\eqref{eq:sharpLDP} provides the exact asymptotics of \(\frac{1}{n} \log \nu_n(A)\). 

Additionally, we have some relations between the sharp LDP \eqref{eq:sharpLDP} and the rough LDP \eqref{eq:LDP2}. 
Suppose that the sequence \((\nu_n)_{n \in \mathbb{N}}\) satisfies the sharp LDP with rate \(I\). Since \(\inf_{x \in F} I(x) \le \essinf_F I\) for all closed sets \(F \subset X\), the upper bound in the rough LDP \eqref{eq:LDP2} holds for all \(A \in \mathcal{B}(X)\). On the other hand, the lower bound in the rough LDP \eqref{eq:LDP2} may not hold for some open sets \(G\) if \(\inf_{x \in G} I(x) < \essinf_G I\). 
However, if \(I \colon X \to [0, \infty]\) is continuous and every \(x \in X\) has a neighborhood base \(\mathcal{U}_x\) such that \(\mu(U) > 0\) for all \(U \in \mathcal{U}_x\), then \(\inf_{x \in G} I(x) = \essinf_G I\) for all open sets \(G \subset X\). Consequently, the lower bound in the rough LDP \eqref{eq:LDP2} holds.
\end{remark}

In the following, we consider the (upper) Varadhan functional \(\phi \colon L^0 \to [-\infty, \infty]\) defined by
\[
\phi(f) := \limsup_{n \to \infty} \tfrac{1}{n} \log \int_X e^{n f} \, \mathrm{d}\nu_n.
\]
The function $\phi$ shares the properties of a monetary risk measure, and  is maxitive as a consequence of the principle of the largest term (see (1) in Examples \ref{examples}). 
We also consider the associated concentration \(J\) defined as in \eqref{eq:concentration}, the minimal penalty \(\I\) defined as in \eqref{eq:minrate0}, and the set \(L^{\phi}\) defined as in~\eqref{eq:unbounded}. For every \(A \in \mathcal{X}\),
\[
J_A := \limsup_{n \to \infty} \tfrac{1}{n} \log \nu_n(A).
\]

As a consequence of Theorem \ref{thm:main1}, we have the following.

\begin{proposition}\label{prop:sharpLDP}
The sequence $(\nu_n)_{n\in\N}$ verifies the sharp LDP \eqref{eq:sharpLDP} with rate $I$ if, and only if, it satisfies the Laplace principle (LP)
\begin{equation}\label{eq:sharpLP}
  \lim_{n\to\infty}\tfrac{1}{n}\log \int_X e^{n f} {\rm d}\nu_n=\esssup_X\{f-I\}\quad\mbox{ for all }f\in L^{\phi}.  
\end{equation}
In that case, $I=\I$~a.s.    
\end{proposition}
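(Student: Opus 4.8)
The plan is to route both implications through Theorem~\ref{thm:main1}, applied to the maxitive monetary risk measure $\phi(f)=\limsup_{n}\tfrac1n\log\int_X e^{nf}\,{\rm d}\nu_n$ on $L^0$ (maxitive by (1) in Examples~\ref{examples}), whose concentration is $J_A=\limsup_n\tfrac1n\log\nu_n(A)$. Theorem~\ref{thm:main1} only produces $\limsup$-level identities, so the substance of the proof is to add the two ``$\liminf$'' estimates that promote these to the genuine-limit statements \eqref{eq:sharpLDP} and \eqref{eq:sharpLP}. Two facts will be used freely: $\essinf_X I=0$ (forced by $\phi(0)=\esssup_X(-I)=0$), and $\phi(f)<\infty$ for $f\in L^\phi$ (convexity of $\phi$ together with $\phi(0)=0$), hence $c:=\esssup_X(f-I)=\phi(f)\in\R$ for such $f$.

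For ``sharp LDP $\Rightarrow$ LP'' I would first observe that \eqref{eq:sharpLDP} is exactly the statement $J_A=-\essinf_A I$ for all $A\in\XX$, which is condition~(4) of Theorem~\ref{thm:main1}; since $\phi$ is maxitive on all of $L^0$, the implication $(4)\Rightarrow(6)$ already yields $\limsup_n\tfrac1n\log\int_X e^{nf}\,{\rm d}\nu_n=c$ for every $f\in L^\phi$. What remains is the matching lower bound $\liminf_n\tfrac1n\log\int_X e^{nf}\,{\rm d}\nu_n\ge c$, and this is where the only genuinely new argument is needed. Fix $\delta>0$. On the positive-measure set $\{f-I>c-\delta\}$ the rate $I$ is finite a.s., so this set decomposes, up to a null set, into the countably many blocks $\{j\delta\le f<(j+1)\delta\}\cap\{k\delta\le I<(k+1)\delta\}$ ($j\in\Z$, $k\ge0$), and one of them, say $B$ with indices $(j_0,k_0)$, has positive measure. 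On $B$ one has $f\ge j_0\delta$, $I<(k_0+1)\delta$, and $c-\delta<f-I<(j_0-k_0+1)\delta$, so $(j_0-k_0)\delta>c-2\delta$. Then the crude estimate $\int_X e^{nf}\,{\rm d}\nu_n\ge e^{nj_0\delta}\nu_n(B)$, combined with \eqref{eq:sharpLDP} applied to $B$ (so $\lim_n\tfrac1n\log\nu_n(B)=-\essinf_B I\ge-(k_0+1)\delta$), gives $\liminf_n\tfrac1n\log\int_X e^{nf}\,{\rm d}\nu_n\ge j_0\delta-(k_0+1)\delta>c-3\delta$; letting $\delta\downarrow0$ finishes this direction.

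For the converse ``LP $\Rightarrow$ sharp LDP'' I would take $\limsup$ in \eqref{eq:sharpLP} to obtain $\phi(f)=\esssup_X(f-I)$ for all $f\in L^\phi$; in particular $\phi(f)={\rm ML}_I(f)$ for all $f\in L^\infty$, which is condition~(5) of Theorem~\ref{thm:main1}, hence $\limsup_n\tfrac1n\log\nu_n(A)=J_A=-\essinf_A I$ for all $A$. It then remains to prove $\liminf_n\tfrac1n\log\nu_n(A)\ge-\essinf_A I$, which is non-trivial only when $\mu(A)>0$ and $\essinf_A I<\infty$. For this I would test \eqref{eq:sharpLP} against $f=-M\,1_{A^c}\in L^\infty$ for a fixed $M>\essinf_A I$: since $I\ge0$ one has $\esssup_X(-M1_{A^c}-I)=(-\essinf_A I)\vee(-M-\essinf_{A^c}I)=-\essinf_A I$, so $\lim_n\tfrac1n\log\bigl(\nu_n(A)+e^{-nM}\nu_n(A^c)\bigr)=-\essinf_A I>-M$. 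Hence, for large $n$, $\nu_n(A)+e^{-nM}\nu_n(A^c)\ge 2e^{-nM}\ge 2e^{-nM}\nu_n(A^c)$, so $\nu_n(A)\ge\tfrac12\bigl(\nu_n(A)+e^{-nM}\nu_n(A^c)\bigr)$, and taking $\liminf$ yields $\liminf_n\tfrac1n\log\nu_n(A)\ge-\essinf_A I$. Together with the $\limsup$ identity this gives \eqref{eq:sharpLDP}. In either direction $\phi|_{L^\infty}={\rm ML}_I$ (condition~(5)), so the final clause $I=\I$ a.s.\ follows from the last assertion of Theorem~\ref{thm:main1}.

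I expect the lower bound in the first implication to be the main obstacle: Theorem~\ref{thm:main1} gives only the $\limsup$ half of the Laplace principle, and the estimate must be carried out for an \emph{arbitrary, possibly unbounded} $f\in L^\phi$. The device that makes it succeed is the localization to a block $B$ on which both $f$ and the rate $I$ oscillate by less than $\delta$, so that the otherwise hopelessly lossy one-point bound $\int_B e^{nf}\,{\rm d}\nu_n\ge e^{n\,\essinf_B f}\nu_n(B)$ becomes sharp up to $O(\delta)$.
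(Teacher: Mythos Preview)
Your proof is correct, but it diverges from the paper's in how the ``$\liminf$'' half is upgraded in each direction. The paper does not perform any direct estimation: instead, in both implications it takes a subsequence $(n_k)$ achieving the relevant $\liminf$, builds the auxiliary maxitive risk measure $\psi(h)=\limsup_k\tfrac{1}{n_k}\log\int_X e^{n_k h}\,{\rm d}\nu_{n_k}$, observes that its concentration (resp.\ its values on $L^\phi$) still matches $-\essinf_A I$ (resp.\ $\esssup_X(h-I)$) because the hypothesis is a genuine limit, and then reapplies Theorem~\ref{thm:main1} to $\psi$ to conclude $\psi(f)=\phi(f)$, i.e.\ $\liminf=\limsup$. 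Your approach replaces this subsequence trick with two explicit computations: for ``sharp LDP $\Rightarrow$ LP'' you localize to a block $B$ on which $f$ and $I$ each oscillate by at most $\delta$ and use the crude bound $\int e^{nf}\,{\rm d}\nu_n\ge e^{nj_0\delta}\nu_n(B)$; for ``LP $\Rightarrow$ sharp LDP'' you test against $f=-M\,1_{A^c}$ and extract $\nu_n(A)\ge\tfrac12(\nu_n(A)+e^{-nM}\nu_n(A^c))$ for large $n$. The paper's argument is shorter and more uniform---one mechanism for both directions, and it never needs the unbounded-$f$ case to be handled specially---while yours is more concrete and self-contained, making transparent exactly why the hypothesis that \eqref{eq:sharpLDP} (resp.\ \eqref{eq:sharpLP}) is an honest limit, not just a $\limsup$, is what drives the missing lower bound. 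One small point: your claim ``$c=\phi(f)\in\R$'' is not quite justified ($\phi(f)$ could in principle be $-\infty$ for unbounded $f\in L^\phi$), but in that case the $\limsup$ identity already forces the limit to be $-\infty$ and the lower-bound step is vacuous, so the argument is unaffected.
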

\begin{proof}
Suppose that \((\nu_n)_{n\in\N}\) satisfies the sharp LDP with rate  \(I\). In that case, \({J}_A = -\essinf_A I\) for all \(A \in \XX\), and it follows from $(4)\Rightarrow(6)$ in Theorem \ref{thm:main1} that 
\[
{\phi}(f) = \limsup_{n\to\infty} \tfrac{1}{n} \log \int_X e^{n f} \, {\rm d}\nu_n = \esssup_X\{f - I\} \quad \text{for all } f \in L^{\phi},
\]
and  \(I = \I\)~a.s. To establish \eqref{eq:sharpLP}, it remains to show that the limit superior in \(\phi(f)\) is in fact a limit. Fixed \(f \in L^{\phi}\), take a subsequence \(n_1 < n_2 < \cdots\) such that 
\[
\lim_{k\to\infty} \tfrac{1}{n_k} \log \int_X e^{n_k f} \, {\rm d}\nu_{n_k} = \liminf_{n\to\infty} \tfrac{1}{n} \log \int_X e^{n f} \, {\rm d}\nu_n.
\]
The monetary risk measure \(\psi(h) := \limsup_{k\to\infty} \tfrac{1}{n_k} \log \int_X e^{n_k h} \, {\rm d}\nu_{n_k}\) is maxitive by the principle of the largest term, and the associated concentration  \(J^\psi\) satisfies
\[
J^\psi_A = \lim_{k\to\infty} \tfrac{1}{n_k} \log \nu_{n_k}(A) = -\essinf_A I \quad \text{for all } A \in \XX.
\]
Again, by Theorem \ref{thm:main1}, we have 
\[
\psi(h) = \limsup_{n\to\infty} \tfrac{1}{n} \log \int_X e^{n h} \, {\rm d}\nu_n = \esssup_X\{h - I\} \quad \text{for all } h \in L^{\psi}.
\]
In particular, since \(f \in L^{\phi} \subset L^{\psi}\), we obtain
\begin{align*}
\esssup_X\{f - I\} &=\phi(f)=\limsup_{n\to\infty} \tfrac{1}{n} \log \int_X e^{n f} \, {\rm d}\nu_n\\
 &\geq 
\liminf_{n\to\infty} \tfrac{1}{n} \log \int_X e^{n f} \, {\rm d}\nu_n = \psi(f) = \esssup_X\{f - I\}.
\end{align*}
This shows that the limit superior in \(\phi(f)\) is indeed a limit, as required. 

The converse direction, where the LP~\eqref{eq:sharpLP} implies the sharp LDP, follows from the implication \((6) \Rightarrow (4)\) in Theorem~\ref{thm:main1}. The existence of the limit in this case can be established by a subsequence argument similar to the one used above.  
\end{proof}

\begin{remark}
If \(X\) is a Hausdorff regular topological space and \(\mathcal{X} = \mathcal{B}(X)\), Varadhan's integral lemma (see, e.g., \cite[Theorem 4.3.1]{dembo}) asserts that if the LDP~\eqref{eq:LDP2} holds with  rate  \(I\), then the following form of the Laplace principle is satisfied
\[
\lim_{n \to \infty} \tfrac{1}{n} \log \int_X e^{n f} \, \mathrm{d}\nu_n = \sup_{x \in X} \{ f(x) - I(x) \} \quad \text{for all } f \in C_b(X),
\]
where \(C_b(X)\) denotes the set of all bounded continuous functions from \(X\) to \(\mathbb{R}\). 

The converse statement, due to Bryc~\cite{bryc}, is true if \(I\) has compact level sets or if \(X\) is a normal topological space, see also \cite[Theorem 4.4.13]{dembo}. Thus, Proposition~\ref{prop:sharpLDP} can be understood as a non-topological version of the Varadhan-Bryc equivalence between the LDP and the Laplace principle, where the supremum is replaced by an essential supremum.
\end{remark}

The following result is standard. For a proof, refer to~\cite[Lemma 13.1]{aliprantis}.
\begin{lemma}\label{lem:Lpnorm}
For all \(f \in L^\infty_+\), it holds that
\[
\lim_{n \to \infty} \left( \int_X f^n \, \mathrm{d}\mu \right)^{1/n} = \esssup_X f.
\]
\end{lemma}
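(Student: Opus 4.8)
## Proof proposal for Lemma 2.6 ($L^n$-norm converges to $\esssup$)

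The plan is to prove the two inequalities $\limsup_{n\to\infty}\left(\int_X f^n\,{\rm d}\mu\right)^{1/n}\le \esssup_X f$ and $\liminf_{n\to\infty}\left(\int_X f^n\,{\rm d}\mu\right)^{1/n}\ge \esssup_X f$ separately, the second being where essentially all the content lies. Write $M:=\esssup_X f\in[0,\infty]$. If $M=0$ then $f=0$ a.s.\ and the statement is trivial, so assume $M>0$; also the case $M=\infty$ follows from the finite-$M$ case by monotonicity (truncate and let the truncation level tend to $\infty$), so I may assume $0<M<\infty$.

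For the upper bound, since $0\le f\le M$ a.s., monotonicity of the integral gives $\int_X f^n\,{\rm d}\mu\le M^n\mu(X)=M^n$ because $\mu$ is a probability measure; taking $n$-th roots yields $\left(\int_X f^n\,{\rm d}\mu\right)^{1/n}\le M$ for every $n$, hence the $\limsup$ is at most $M$.

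For the lower bound, fix $\varepsilon\in(0,M)$ and set $A:=\{f>M-\varepsilon\}$. By the definition of the essential supremum, $\mu(A)>0$. Then
\[
\int_X f^n\,{\rm d}\mu\ \ge\ \int_A f^n\,{\rm d}\mu\ \ge\ (M-\varepsilon)^n\,\mu(A),
\]
so $\left(\int_X f^n\,{\rm d}\mu\right)^{1/n}\ge (M-\varepsilon)\,\mu(A)^{1/n}$. Since $\mu(A)>0$, we have $\mu(A)^{1/n}\to 1$ as $n\to\infty$, hence $\liminf_{n\to\infty}\left(\int_X f^n\,{\rm d}\mu\right)^{1/n}\ge M-\varepsilon$. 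Letting $\varepsilon\downarrow 0$ gives the lower bound, and combining the two bounds proves the claim.

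The only mild subtlety — and the one point worth stating carefully — is the reduction of the case $M=+\infty$: for each $K>0$ apply the finite case to $f\wedge K$ to get $\liminf_n\left(\int_X f^n\,{\rm d}\mu\right)^{1/n}\ge \liminf_n\left(\int_X (f\wedge K)^n\,{\rm d}\mu\right)^{1/n}=\esssup_X(f\wedge K)$, and note $\esssup_X(f\wedge K)\to+\infty$ as $K\to\infty$ when $\esssup_X f=+\infty$. Everything else is a routine application of monotonicity of the integral and the elementary fact that $c^{1/n}\to 1$ for any constant $c>0$.
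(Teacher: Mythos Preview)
Your proof is correct and follows the standard route (upper bound from $f\le M$ a.s., lower bound from $\mu(\{f>M-\varepsilon\})>0$). The paper does not actually prove this lemma; it simply cites \cite[Lemma 13.1]{aliprantis}, so there is nothing to compare approaches with. One minor remark: since the hypothesis is $f\in L^\infty_+$, the essential supremum $M$ is automatically finite, so your reduction of the case $M=+\infty$ is unnecessary here (though harmless).
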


We establish the following criterion for the existence of a sharp LDP, which seems to be
new.
\begin{theorem}\label{thm:sharpLDP}
Let \(I \colon X \to [0, \infty]\) be a measurable function, and suppose that \((A_k)_{k \in \mathbb{N}} \subset \mathcal{X}\) satisfies the following conditions:
\begin{enumerate}
    \item \(\{I < \infty\} = \bigcup_{k \in \mathbb{N}} A_k\) a.s.
    \item \(\limsup_{n \to \infty} \tfrac{1}{n} \log \nu_n(A_k^c) \to -\infty\) as \(k \to \infty\).
    \item For each \(k \in \mathbb{N}\),
    \[
    \frac{1}{n} \log \tfrac{{\rm d}\nu_n}{{\rm d}\mu} \to -I \quad \text{as } n \to \infty \text{ a.s.~uniformly on } A_k.
    \]
\end{enumerate}
Then, \((\nu_n)_{n \in \mathbb{N}}\) satisfies both the sharp LDP~\eqref{eq:sharpLDP} and the LP~\eqref{eq:sharpLP} with rate \(I\).
\end{theorem}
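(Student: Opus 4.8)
The plan is to deduce both assertions from Theorem~\ref{thm:main4} applied to the upper Varadhan functional $\phi(f)=\limsup_{n\to\infty}\tfrac1n\log\int_X e^{nf}\,{\rm d}\nu_n$, which is a maxitive monetary risk measure on $L^0$ by the principle of the largest term (see~(1) in Examples~\ref{examples}) and whose concentration is $J_A=\limsup_{n\to\infty}\tfrac1n\log\nu_n(A)$. Hypotheses~(1) and~(2) of Theorem~\ref{thm:main4} are literally hypotheses~(1) and~(2) of the present statement, so the whole task reduces to verifying hypothesis~(3) of Theorem~\ref{thm:main4}, namely $J_B=-\essinf_B I$ for every measurable $B\subset A_k$; I will in fact establish the stronger fact that the genuine limit $\lim_{n\to\infty}\tfrac1n\log\nu_n(B)=-\essinf_B I$ exists for all such $B$.

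First I would fix $k$, write $g_n:=\tfrac{{\rm d}\nu_n}{{\rm d}\mu}$ (note $I<\infty$ $\mu$-a.e.\ on $A_k$ by~(1)), and use hypothesis~(3) to choose a $\mu$-null set $N_k$ with $\varepsilon_n:=\sup_{x\in A_k\setminus N_k}\bigl|\tfrac1n\log g_n(x)+I(x)\bigr|\to0$. Exponentiating gives $e^{-n\varepsilon_n}e^{-nI}\le g_n\le e^{n\varepsilon_n}e^{-nI}$ $\mu$-a.e.\ on $A_k$ for all large $n$, whence, integrating over a measurable $B\subset A_k$,
\[
e^{-n\varepsilon_n}\int_B e^{-nI}\,{\rm d}\mu\ \le\ \nu_n(B)\ \le\ e^{n\varepsilon_n}\int_B e^{-nI}\,{\rm d}\mu .
\]
Since $e^{-I}1_B\in L^\infty_+$ with $\esssup_X(e^{-I}1_B)=e^{-\essinf_B I}$, Lemma~\ref{lem:Lpnorm} yields $\bigl(\int_B e^{-nI}\,{\rm d}\mu\bigr)^{1/n}\to e^{-\essinf_B I}$, i.e.\ $\tfrac1n\log\int_B e^{-nI}\,{\rm d}\mu\to-\essinf_B I$ (with $\log 0=-\infty$ and $\essinf$ over a $\mu$-null set equal to $+\infty$ when $\mu(B)=0$). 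Combined with $\varepsilon_n\to0$ this gives $\tfrac1n\log\nu_n(B)\to-\essinf_B I$. Thus hypothesis~(3) of Theorem~\ref{thm:main4} holds, and that theorem yields $\phi(f)={\rm ML}_I(f)$ for all $f\in L^\phi$ and $I=\I$ a.s.

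It then remains to upgrade the $\limsup$ defining $J_A$ to an honest limit for every $A\in\mathcal{X}$. Writing $B_k:=A_1\cup\dots\cup A_k$, so that $B_k\uparrow\{I<\infty\}$ a.s.\ and $B_k^c\subset A_k^c$, the previous step and the principle of the largest term applied to the finite union $A\cap B_k=\bigcup_{j\le k}(A\cap A_j)$ give $\lim_{n\to\infty}\tfrac1n\log\nu_n(A\cap B_k)=-\essinf_{A\cap B_k}I$; then from $\nu_n(A\cap B_k)\le\nu_n(A)\le\nu_n(A\cap B_k)+\nu_n(A_k^c)$ and, once more, the principle of the largest term,
\[
-\essinf_{A\cap B_k}I\ \le\ \liminf_{n\to\infty}\tfrac1n\log\nu_n(A)\ \le\ \limsup_{n\to\infty}\tfrac1n\log\nu_n(A)\ \le\ \max\Bigl\{-\essinf_{A\cap B_k}I,\ \limsup_{n\to\infty}\tfrac1n\log\nu_n(A_k^c)\Bigr\}.
\]
Letting $k\to\infty$ and using $\essinf_{A\cap B_k}I\downarrow\essinf_A I$ together with hypothesis~(2) pins down $\lim_{n\to\infty}\tfrac1n\log\nu_n(A)=-\essinf_A I$, which is the sharp LDP~\eqref{eq:sharpLDP}; the Laplace principle~\eqref{eq:sharpLP} and $I=\I$ a.s.\ then follow from Proposition~\ref{prop:sharpLDP}. (Alternatively, this last upgrade can be carried out by the subsequence argument from the proof of Proposition~\ref{prop:sharpLDP}, since the subsequential Varadhan functionals satisfy the hypotheses of Theorem~\ref{thm:main4} by the same computation.)

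I expect the main obstacle to be the second step: translating the a.s.\ uniform convergence of $\tfrac1n\log g_n$ on $A_k$ into the asymptotic identity $J_B=-\essinf_B I$, for which the decisive ingredients are the exponential sandwich on the densities and the $L^n$-norm asymptotic of Lemma~\ref{lem:Lpnorm} applied to $e^{-I}1_B$. The secondary difficulty, passing from $\limsup$ to $\lim$ on sets not contained in a single $A_k$, is precisely what the tail hypothesis~(2) is tailored for, exactly as in Theorem~\ref{thm:main4}.
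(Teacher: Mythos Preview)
Your proposal is correct and follows essentially the same approach as the paper: you verify hypothesis~(3) of Theorem~\ref{thm:main4} by sandwiching the densities via the uniform convergence and invoking Lemma~\ref{lem:Lpnorm}, then apply Theorem~\ref{thm:main4} and finally upgrade the $\limsup$ to a limit. The only cosmetic difference is in this last step: the paper argues by contradiction (assuming $\liminf<\beta<-\essinf_A I$ and reducing to a subset of some $A_k$), whereas your direct sandwich via $B_k=A_1\cup\dots\cup A_k$ and the principle of the largest term is a clean equivalent.
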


\begin{proof}
For each \(n \in \mathbb{N}\), define \(h_n := \frac{{\rm d}\nu_n}{{\rm d}\mu}\). Fix \(k \in \mathbb{N}\) and \(\delta \in (0,1)\). By the condition (3), for sufficiently large \(n\), it holds that
\[
e^{-\delta n} \leq h_n e^{n I} \leq e^{\delta n} \quad \text{a.s. on } A_k.
\]
For any \(B \subset A_k\), we obtain
\begin{align*}
\limsup_{n \to \infty} \tfrac{1}{n} \log \nu_n(B) &= \limsup_{n \to \infty} \tfrac{1}{n} \log \int_B h_n \, {\rm d}\mu \\
&= \limsup_{n \to \infty} \tfrac{1}{n} \log \int_B h_n e^{n I} e^{-n I} \, {\rm d}\mu \\
&\leq \limsup_{n \to \infty} \tfrac{1}{n} \log \int_B e^{\delta n} e^{-n I} \, {\rm d}\mu \\
&= \delta + \limsup_{n \to \infty} \tfrac{1}{n} \log \int_B e^{-n I} \, {\rm d}\mu.
\end{align*}
Similarly, we have
\[
\liminf_{n \to \infty} \tfrac{1}{n} \log \nu_n(B) \geq -\delta + \liminf_{n \to \infty} \tfrac{1}{n} \log \int_B e^{-n I} \, {\rm d}\mu.
\]
Letting \(\delta \downarrow 0\), we conclude that the limit superior in \(J_B\) is indeed a limit, and
\begin{equation}\label{eq:SubsetB}
J_B = \lim_{n \to \infty} \tfrac{1}{n} \log \nu_n(B) = \liminf_{n \to \infty} \tfrac{1}{n} \log \int_B e^{-n I} \, {\rm d}\mu.
\end{equation}
Applying Lemma~\ref{lem:Lpnorm}, we obtain
\[
J_B = \lim_{n \to \infty} \left( \int_B e^{-n I} \, {\rm d}\mu \right)^{1/n} = \log \left(\esssup_B e^{-I}\right) = -\essinf_B I.
\]
Since \(B \subset A_k\) was arbitrary, we conclude that the condition (3) in Theorem~\ref{thm:main4} is satisfied.
Also, the conditions (1) and (2) are exactly those in Theorem~\ref{thm:main4}. Therefore, we obtain from that result that
\begin{equation}\label{eq:repPrelim}
J_A = \limsup_{n \to \infty} \tfrac{1}{n} \log \nu_n(A) = -\essinf_A I \quad \text{for all } A \in \mathcal{X},
\end{equation}
and, furthermore, \(\I = I\) a.s.

To complete the proof, it suffices to show that the limit superior in \eqref{eq:repPrelim} is indeed a limit; in that case, both the sharp LDP and the LP hold by Proposition~\ref{prop:sharpLDP}. Suppose by contradiction that \(\liminf_{n \to \infty} \frac{1}{n} \log \nu_n(A) < \beta < -\essinf_A I\) for some \(A \in \mathcal{X}\) and \(\beta \in \mathbb{R}\). In that case, \(\mu(A \cap \{I < -\beta\}) > 0\). By the condition (1), there exists \(k \in \mathbb{N}\) such that \(B := A_k \cap A \cap \{I < -\beta\}\) satisfies \(\mu(B) > 0\). 

Since \(B \subset A_k\), the limit superior in \(J_B\) is a limit by \eqref{eq:SubsetB}, and 
\[
\beta > \liminf_{n \to \infty} \tfrac{1}{n} \log \nu_n(A) \geq \lim_{n \to \infty} \frac{1}{n} \log \nu_n(B) = J_B = -\essinf_B I \geq \beta,
\]
which is a contradiction. The proof is complete.
\end{proof}

The previous result gives us a technique to establish a sharp large deviations principle if it exists. 
Namely, by computing the  $\lim_{n\to \infty}\tfrac{1}{n}\log \frac{{\rm d}\nu_n}{{\rm d}\mu}=-I$, we obtain a candidate for the rate $I$. 
The next step is to exhaust $\{I<\infty\}$ by sets $A_k$ where the convergence is uniform and concentrating low probability mass in the complements $A_k^c$.

\begin{example}
For \(d \in \mathbb{N}\), consider \(X = [1,\infty)^d\) endowed with the Borel \(\sigma\)-field \(\mathcal{B}([1,\infty)^d)\). 
For each \(n \in \mathbb{N}\), let \(\nu_n\) be the law of a multivariate Pareto distribution of the first kind on \([1,\infty)^d\) with shape parameter \(n\), i.e., \(\nu_n\) has the density function 
\[
f_n(x_1,\ldots,x_d)=\frac{n(n+1)\cdots (n+d-1)}{\left(\sum_{i=1}^d x_i - d + 1\right)^{n+d}} \quad \text{for all } x_i \ge 1. 
\]   
In this case, we have 
\[
 \lim_{n\to\infty} \tfrac{1}{n} \log f_n(x_1,\ldots,x_d) = -\log\left(\sum_{i=1}^d x_i - d + 1\right).
\]
This suggests the candidate rate function \(I(x) = \log\left(\sum_{i=1}^d x_i - d + 1\right)\), assuming a sharp LDP holds. 
The convergence is uniform on the sets \(A_k := [1,k]^d\).

Using the fact that the cumulative distribution function of \(\nu_n\) is 
\[ 
F_n(x_1,\ldots,x_d) = 1 - \left(\sum_{i=1}^d x_i - d + 1\right)^{-n}, 
\]
for each \(k \in \mathbb{N}\) we have  
\begin{align*}
J_{A_k^c} &= \limsup_{n\to\infty} \tfrac{1}{n} \log \nu_n(A_k^c) \\
&= \limsup_{n\to\infty} \tfrac{1}{n} \log \left(1 - F_n(k,\ldots,k)\right) \\
&= -\log\left(d(k-1) + 1\right) \to -\infty \quad \text{as } k \to \infty.
\end{align*} 

We have verified the conditions (1)--(3) in Theorem~\ref{thm:sharpLDP}. 
Thus, we obtain the sharp LDP, i.e.,
\[
\lim_{n\to\infty} \tfrac{1}{n} \log \nu_n(A) = -\essinf_{(x_1,\ldots,x_d) \in A} \log\left(\sum_{i=1}^d x_i - d + 1\right) \quad \text{for all } A \in \mathcal{B}([1,\infty)^d).
\]
\end{example}

Theorem~\ref{thm:sharpLDP} provides sufficient conditions for the sharp LDP for general sequences of probability measures. In the following, we revisit the i.i.d.~case under this new criterion. For the remainder of this subsection, we assume that $X = \mathbb{R}^d$, $\mathcal{X} = \mathcal{B}(\mathbb{R}^d)$ (the Borel $\sigma$-algebra), and that $\mu$ has the same null sets as the Lebesgue measure $\lambda$ on $\mathbb{R}^d$. In addition, let $(\xi_n)_{n \in \mathbb{N}}$ be a sequence of i.i.d.~$\mathbb{R}^d$-valued random variables defined on a probability space $(\Omega, \Sigma, P)$, and assume that the sample mean $\bar{\xi}_n = \frac{1}{n}(\xi_1 + \cdots + \xi_n)$ is distributed according to the measure $\nu_n$ for each $n \in \mathbb{N}$.
 The \emph{logarithmic moment generating function} of $\nu_1$ is given by
\[
\Lambda \colon \mathbb{R}^d \to [0, \infty], \quad \Lambda(x) := \log \int_{\mathbb{R}^d} e^{x \cdot y} \nu_1({\rm d}y),
\]
where $x \cdot y$ denotes the standard inner product. Additionally, we define the convex conjugate  $\Lambda^\ast \colon \mathbb{R}^d \to [0, \infty]$ of $\Lambda$ by
\[
\Lambda^\ast(x) := \sup_{y \in \mathbb{R}^d} \{ x \cdot y - \Lambda(y) \}.
\]

\begin{remark}\label{rem:cramer}
If \(\Lambda\) is finite, then Cram\'{e}r's theorem asserts that \((\nu_n)_{n \in \mathbb{N}}\) satisfies the rough LDP~\eqref{eq:LDP2} with the rate  \(\Lambda^\ast\), see \cite[Theorem 2.2.30]{dembo}.  
In this case, the rate function \(\Lambda^\ast\) is `good', meaning that the sublevel set \(\{\Lambda^\ast \leq r\} \subset \mathbb{R}^d\) is compact for all \(r \in \mathbb{R}\). 
Although, in this situation, the rough LDP~\eqref{eq:LDP2} is satisfied,  the sharp LDP may not hold. This means that stronger conditions are needed than those required by Cram\'{e}r's theorem. 
 To give an example, consider the distribution ${\rm Bernoulli}(1/2)$. 
 In that case, the set $A:=\{1/2\}$ verifies $\nu_n(A)=0$ for odd $n$, and $\nu_n(A)=\binom{n}{n/2}\tfrac{1}{2^n}$ for even $n$, and 
 $$-\infty=\liminf_{n\to\infty}\tfrac{1}{n}\log\nu_n(A)<\limsup_{n\to\infty}\tfrac{1}{n}\log\nu_n(A)=0.$$
 Therefore, the sequence $(\tfrac{1}{n}\log\nu_n(A))_{n\in\N}$ does not converge  and, consequently, the sharp LDP cannot be verified. 
\end{remark}

Next, we show that Theorem~\ref{thm:sharpLDP} also covers the i.i.d.~case studied by Barbe and Browniatoski~\cite{barbe}. Specifically, Andriani and Baldi~\cite{baldi} gave the asymptotics for the density of the sample means of a sequence of i.i.d.~random variables in $\mathbb{R}^d$, uniformly over compact sets. By combining the result of Andriani and Baldi with Theorem~\ref{thm:sharpLDP}, we obtain the following result, which aligns with~\cite[Corollary 2.1]{barbe}.
\begin{corollary}\label{cor:iid}
 Suppose  that:
 \begin{enumerate}
     \item $\Lambda$ is finite. 
     \item $\nu_n$ has a bounded density $f_n$ for  $n\in\N$ large enough.
 \end{enumerate}
 Then, $(\nu_n)_{n\in\N}$ verifies the sharp LDP and the LP with rate $\Lambda^\ast$.
\end{corollary}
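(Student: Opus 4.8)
The plan is to apply Theorem~\ref{thm:sharpLDP} with rate function $I:=\Lambda^\ast$ and with the exhausting family given by the sublevel sets $A_k:=\{\Lambda^\ast\le k\}$, $k\in\mathbb N$; the conclusion of the corollary is then exactly the conclusion of that theorem. Two preliminary facts carry most of the load. First, since $\Lambda$ is finite, Cram\'er's theorem (Remark~\ref{rem:cramer}) makes $\Lambda^\ast$ a \emph{good} rate function, so each $A_k$ is compact and $\bigcup_k A_k=\{\Lambda^\ast<\infty\}=\operatorname{dom}\Lambda^\ast$; this establishes condition~(1) of Theorem~\ref{thm:sharpLDP}. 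Second, $\operatorname{dom}\Lambda^\ast$ is in fact the \emph{open} set $\mathcal D:=\operatorname{int}\!\big(\operatorname{conv}\operatorname{supp}\nu_1\big)$. The inclusion $\mathcal D\subseteq\operatorname{dom}\Lambda^\ast$ is the standard consequence of the finiteness (hence essential smoothness) of $\Lambda$; for the reverse inclusion the bounded density is used: a measure with a density charges no hyperplane, so for a boundary point $x_0\in\partial\big(\operatorname{conv}\operatorname{supp}\nu_1\big)$ with supporting unit vector $u$ one has $u\cdot z<u\cdot x_0$ a.e., and dominated convergence forces $\Lambda^\ast(x_0)\ge\sup_{s>0}\big(-\log\int e^{s(u\cdot z-u\cdot x_0)}\,\nu_1({\rm d}z)\big)=+\infty$ (a harmless passage to the first index $n_0$ with $\nu_{n_0}$ of bounded density handles the case when $\nu_1$ itself lacks a density, via $\Lambda^\ast_{\nu_1}=\tfrac1{n_0}\Lambda^\ast_{\nu_{n_0}}$ and the fact that the convex support is unchanged). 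Hence each $A_k=\{\Lambda^\ast\le k\}$ is a compact subset of the open set $\mathcal D$, on which $\Lambda^\ast$ is continuous.

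Next I would verify condition~(2). Fix $k$ and $k'<k$. Since $\{\Lambda^\ast\le k'\}$ is a compact subset of $\mathcal D$ and $\Lambda^\ast$ is continuous on $\mathcal D$, the set $U:=\{\Lambda^\ast<k\}\cap\mathcal D$ is open and contains $\{\Lambda^\ast\le k'\}$; as $U$ is open and disjoint from $\{\Lambda^\ast>k\}$, the closure $\operatorname{cl}\{\Lambda^\ast>k\}$ is disjoint from $U$, hence from $\{\Lambda^\ast\le k'\}$. Letting $k'\uparrow k$ gives $\inf_{\operatorname{cl}\{\Lambda^\ast>k\}}\Lambda^\ast\ge k$. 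Applying the rough-LDP upper bound of Cram\'er's theorem to the closed set $\operatorname{cl}(A_k^c)=\operatorname{cl}\{\Lambda^\ast>k\}$ then yields $\limsup_{n}\tfrac1n\log\nu_n(A_k^c)\le-k\to-\infty$ as $k\to\infty$.

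For condition~(3) I would first normalize the reference measure. The statement to be proved depends on $\mu$ only through its null sets — the space $L^0$, the functional $\phi$ (hence $L^\phi$), and the essential suprema and infima appearing in \eqref{eq:sharpLDP} and \eqref{eq:sharpLP} are all unchanged if $\mu$ is replaced by an equivalent probability measure — so I may assume $\mu$ has a continuous, strictly positive Lebesgue density $\rho$ (e.g.\ a Gaussian). Then $\tfrac{{\rm d}\nu_n}{{\rm d}\mu}=f_n/\rho$ a.e., and on the compact set $A_k$ the term $\tfrac1n\log\rho$ is uniformly $O(1/n)$, hence tends to $0$ uniformly on $A_k$; so it suffices to prove $\tfrac1n\log f_n\to-\Lambda^\ast$ uniformly on $A_k$. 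This is exactly the density asymptotics of Andriani and Baldi~\cite{baldi}: under hypotheses~(1)--(2), $f_n(x)=(n/2\pi)^{d/2}\big(\det B(x)\big)^{-1/2}e^{-n\Lambda^\ast(x)}(1+o(1))$ uniformly on compact subsets of $\mathcal D$, with $B(\cdot)\succ 0$ continuous, and taking logarithms and dividing by $n$ gives the required uniform convergence on $A_k\subset\mathcal D$. With conditions~(1)--(3) of Theorem~\ref{thm:sharpLDP} in hand, that theorem delivers both the sharp LDP~\eqref{eq:sharpLDP} and the LP~\eqref{eq:sharpLP} with rate $\Lambda^\ast$, which is the assertion of the corollary.

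The main obstacle is the interface with the Andriani--Baldi estimate: its uniformity holds only on compact subsets of the \emph{open} set $\mathcal D=\operatorname{int}\operatorname{dom}\Lambda^\ast$, so one must know that the natural exhausting sets $\{\Lambda^\ast\le k\}$ stay clear of $\partial\mathcal D$, i.e.\ that $\operatorname{dom}\Lambda^\ast$ is open. This is the one point where the boundedness of the density does work beyond merely powering the local limit theorem, and making that short convex-analysis/dominated-convergence argument airtight is the crux; everything else is a matter of matching notation to the hypotheses of Theorem~\ref{thm:sharpLDP}.
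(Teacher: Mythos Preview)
Your proof is correct and follows essentially the same route as the paper: set $I=\Lambda^\ast$, take $A_k=\{\Lambda^\ast\le k\}$ (compact by goodness), use Cram\'er's rough upper bound for condition~(2), and feed the Andriani--Baldi uniform density asymptotics on compacts into condition~(3) of Theorem~\ref{thm:sharpLDP}. You are more explicit than the paper on two points it leaves implicit---the openness of $\operatorname{dom}\Lambda^\ast$ (which the paper dispatches via a footnote identifying the effective and admissible domains when $\Lambda$ is finite) and the passage from the Lebesgue density $f_n$ to $\tfrac{{\rm d}\nu_n}{{\rm d}\mu}$---but the architecture is identical.
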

\begin{proof}
Set \(I := \Lambda^\ast\).  
If the conditions (1) and (2) above are satisfied, \cite[Theorem 3.1]{baldi} asserts that the asymptotic equivalence
\begin{equation}\label{q:Baldi}
f_n(x) \sim \left( \frac{n}{2\pi} \right)^{d/2} \left| I''(x) \right|^{1/2} e^{-n I(x)} \quad \text{as } n \to \infty,
\end{equation}
holds uniformly on compact subsets of the effective domain \(\{ I < \infty \}\), where \(I''\) denotes the Hessian of \(I\), and \(\left| I''(x) \right|\) denotes the determinant of \(I''(x)\).\footnote{\cite[Theorem 3.1]{baldi} considers instead the \emph{admissible domain}, which is a different set from the \emph{effective domain} (the admissible domain is a subset of the effective domain). However, under our assumption that \( \Lambda \) is finite, both sets coincide.
}  

As pointed out in \cite{baldi}, the function \(I\) is \(C^\infty\), and \(I''\) is strictly positive definite on the effective domain. By continuity, for every compact set \(K \subset \{ I < \infty \}\), we can find constants \(0 < a < b\) such that \(a < \left| I''(x) \right| < b\) for all \(x \in K\). 
 Combining this with the asymptotic equivalence in \eqref{q:Baldi}, we obtain that
\begin{equation}\label{q:Baldi2}
\tfrac{1}{n} \log(f_n(x)) \to -I(x) \quad \text{as } n \to \infty,
\end{equation}
uniformly on compact subsets of \(\{ I < \infty \}\).

Define \(A_k := \{ I \leq k \}\) for every \(k \in \mathbb{N}\). By Remark~\ref{rem:cramer}, the sets \((A_k)_{k \in \mathbb{N}}\) are compact subsets of \(\{ I < \infty \}\). As noted in Remark~\ref{rem:cramer}, the sequence \((\nu_n)_{n \in \mathbb{N}}\) satisfies the LDP with rate  \(I\). Hence, in particular, 
\[
J_{A_k^c} \leq J_{\{ I \geq k \}} \leq -\inf_{x \in \{ I \geq k \}} I(x) \leq -k.
\]
Letting \(k \to \infty\) yields \(\lim_{k \to \infty} J_{A_k^c} = -\infty\). This shows that \(I\) and the sequence \((A_k)_{k \in \mathbb{N}}\) satisfy the conditions (1)--(3) of Theorem~\ref{thm:sharpLDP}. Consequently, \((\nu_n)_{n \in \mathbb{N}}\) satisfies the sharp LDP with rate \(I\), as required. 
\end{proof}

\begin{remark}
As mentioned, Corollary \ref{cor:iid} is similar to \cite[Corollary 2.1]{barbe}. The result in \cite{barbe} is proven under Assumptions (1.1), (2.1), and (3.1) in \cite{barbe}. These conditions are rather  technical; for instance, Assumption (2.1) requires the existence of a homeomorphism between the topological interior of the effective domain of \( \Lambda^\ast \) and the support of \( \nu_1 \). 

In contrast, Corollary \ref{cor:iid} assumes simpler conditions: that the averages have bounded densities and that $\Lambda$ is finite. We point out that Assumption (2.2) in \cite{barbe} already implies the existence of densities for the averages, as explained in Comment 2.6 in \cite{barbe}. 

Finally, Corollary \ref{cor:iid} provides also the LP which is not obtained in \cite{barbe}.
\end{remark}

\subsection{A limit result for distorted expectations} 
In this subsection, let \((\nu_n)_{n \in \mathbb{N}}\) be a sequence of probability measures on a measurable space \((X, \mathcal{X})\) such that \(\nu_n \ll \mu\) for all \(n \in \mathbb{N}\), where \(\mu\) is a given reference probability measure on \((X, \mathcal{X})\). Given a concave distortion  \(g: [0, 1] \to [0, 1]\), we define the monetary risk measure \(\phi: L^0 \to [-\infty, \infty]\) by
\begin{equation}\label{eq:distrotedVaradhan}
\phi(f) := \limsup_{n \to \infty} \tfrac{1}{n} \log \mathcal{E}_{\nu_n, g} (e^{nf}),
\end{equation}
where \(\mathcal{E}_{\nu_n, g}(f) := \int_0^\infty g \circ \nu_n(f > x) \, \mathrm{d}x\) is the distorted expectation associated with \(g\) (see (3) in Example \ref{examples}). Since \(g\) is concave, \(\mathcal{E}_{\nu_n, g}\) is subadditive for all \(n\), and \(\phi\) is maxitive by the principle of the largest term (see (1) in Example \ref{examples}). 
The  concentration function \(J\) associated with $\phi$ is given by
\[
J_A := \limsup_{n \to \infty} \tfrac{1}{n} \log(g \circ \nu_n(A)) \quad \text{for all } A \in \mathcal{X}.
\]

\begin{lemma}\label{lem:pconcave}
  If $h\colon [0,1]\to[0,1]$ is a concave distortion, then the right derivative $h^\prime_+(0)$ exists as an extended real number in $[1,\infty]$. 
Moreover, if $h^\prime_+(0)<\infty$, then $h(x)\le x\cdot h^\prime_+(0)$ for all $x\in(0,1]$.
\end{lemma}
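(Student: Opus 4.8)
The plan is to exploit the standard fact that a concave function on an interval has monotone difference quotients. First I would fix the function $h$ and, for $x \in (0,1]$, consider the difference quotient $q(x) := \tfrac{h(x) - h(0)}{x} = \tfrac{h(x)}{x}$, using $h(0) = 0$. Concavity of $h$ on $[0,1]$ implies that $q$ is non-increasing on $(0,1]$: indeed, for $0 < x < y \le 1$ one writes $x = (1 - \tfrac{x}{y})\cdot 0 + \tfrac{x}{y}\cdot y$ and applies the concavity inequality $h(x) \ge (1-\tfrac{x}{y})h(0) + \tfrac{x}{y}h(y) = \tfrac{x}{y}h(y)$, which rearranges to $\tfrac{h(x)}{x} \ge \tfrac{h(y)}{y}$. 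Since $q$ is non-increasing, the limit $h'_+(0) = \lim_{x \downarrow 0} q(x) = \sup_{x \in (0,1]} q(x)$ exists in $(-\infty, \infty]$, establishing existence of the right derivative as an extended real number.

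Next I would pin down the range. Because $q$ is non-increasing and $q(1) = h(1) = 1$, we get $h'_+(0) = \sup_{x \in (0,1]} q(x) \ge q(1) = 1$, so $h'_+(0) \in [1, \infty]$ as claimed.

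Finally, for the bound: if $h'_+(0) < \infty$, then since $h'_+(0) = \sup_{x \in (0,1]} \tfrac{h(x)}{x}$ we have $\tfrac{h(x)}{x} \le h'_+(0)$ for every $x \in (0,1]$, i.e. $h(x) \le x \cdot h'_+(0)$. This is immediate once the supremum characterization is in hand. I do not anticipate a genuine obstacle here; the only point requiring a little care is to justify that the monotone limit defining $h'_+(0)$ coincides with the supremum of the difference quotients (so that both the existence claim and the linear bound follow from the same computation), and to handle the degenerate possibility that $h$ is not continuous at $0$ from the right — but concavity on $[0,1]$ forces $\limsup_{x\downarrow 0} h(x) \le h(0)$ is not automatic, so I would instead note that we never need continuity at $0$: the inequality $h(x) \le x\cdot h'_+(0)$ and the existence of $h'_+(0)$ as defined via $\lim_{x\downarrow 0}\tfrac{h(x)}{x}$ rely only on monotonicity of $q$, which is all that concavity provides.
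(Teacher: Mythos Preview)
Your proposal is correct and follows essentially the same approach as the paper: both arguments reduce everything to the monotonicity of the difference quotient $q(x)=h(x)/x$ on $(0,1]$, established via the same convex-combination computation $h(x)\ge \tfrac{x}{y}h(y)$ for $0<x<y\le 1$. Your write-up is in fact a bit more explicit than the paper's (you spell out why $h'_+(0)\ge q(1)=1$ and why the supremum characterization gives the linear bound), and your closing remark about not needing continuity at $0$ is harmless but unnecessary here.
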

\begin{proof}
All the assertions follow from the fact that the function $x\mapsto \tfrac{h(x)}{x}$ is non-decreasing on $(0,1]$.  To prove that, assume that  $0<x_1<x_2\le 1$. Then,  by concavity, 
$$h(x_1)=h\left(\tfrac{x_1}{x_2}x_2+(1-\tfrac{x_1}{x_2})0\right)\ge \tfrac{x_1}{x_2}h(x_2).$$ Dividing by $x_1$, we get the $\tfrac{h(x_1)}{x_1}\ge \tfrac{h(x_2)}{x_2}$, as desired.
\end{proof}

\begin{definition}
The distortion $g$ is said to have \emph{vanishing order} $p$, with $p\ge 1$, if the distortion $h:=g^p$ is concave and verifies $h^\prime_+(0)<\infty$. 
\end{definition}

The vanishing order  of the distortion $g$ measures how fast the function $g(x)$ falls to $0$ as as $x\downarrow 0$. More specifically, if $g$ has vanishing order $p$, then
\[
\lim_{x\downarrow 0}\frac{g(x)}{x^{1/p}}=\lim_{x\downarrow 0}\frac{h(x)^{1/p}}{x^{1/p}}
=\lim_{x\downarrow 0}\left(\frac{h(x)}{x}\right)^{1/p}=(h^\prime_+(0))^{1/p}\in [1,\infty).
\]
Since the limit $(h^\prime_+(0))^{1/p}$ is finite and not null, then $g$  behaves asymptotically like $x^{1/p}$ near $0$.

\begin{theorem}\label{thm:distorted}
Suppose that $g$ has vanishing order $p$ .  
 If the sequence $(\nu_n)_{n\in\N}$ satisfies the sharp LDP \eqref{eq:sharpLDP} with rate $I$, then 
\[
\lim_{n\to\infty} \tfrac{1}{n}\log \EE_{\nu_n,g}(e^{n f})=\esssup_X(f-\tfrac{1}{p}I)\quad\mbox{ for all }f\in L^{\phi}.
\]
\end{theorem}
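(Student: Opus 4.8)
The plan is to compute the concentration $J$ of $\phi$, show it equals $-\essinf_{(\cdot)}\bigl(\tfrac1pI\bigr)$, identify $\phi$ with the penalized maximum loss ${\rm ML}_{I/p}$ via Theorem~\ref{thm:main1}, and finally upgrade the $\limsup$ defining $\phi$ to a genuine limit by the subsequence device used in the proof of Proposition~\ref{prop:sharpLDP}.

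The key elementary fact is a two-sided power bound on the distortion. Write $h:=g^p$; by definition of vanishing order, $h$ is a concave distortion with $h'_+(0)<\infty$. The chord inequality for concave functions together with $h(0)=0$ and $h(1)=1$ gives $h(x)\ge x$ on $[0,1]$, while Lemma~\ref{lem:pconcave} gives $h(x)\le x\,h'_+(0)$ on $(0,1]$. Taking $p$-th roots and setting $c:=\bigl(h'_+(0)\bigr)^{1/p}\in[1,\infty)$ yields
\[
x^{1/p}\le g(x)\le c\,x^{1/p}\qquad\text{for all }x\in(0,1].
\]
Recall from the present subsection that $J_A=\limsup_{n\to\infty}\tfrac1n\log\bigl(g\circ\nu_n(A)\bigr)$. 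When $\essinf_A I<\infty$, the sharp LDP forces $\nu_n(A)>0$ for all large $n$, and the sandwich gives
\[
\tfrac1p\cdot\tfrac1n\log\nu_n(A)\le\tfrac1n\log g(\nu_n(A))\le\tfrac1n\log c+\tfrac1p\cdot\tfrac1n\log\nu_n(A);
\]
letting $n\to\infty$ and invoking $\tfrac1n\log\nu_n(A)\to-\essinf_A I$ squeezes the middle term, so the limit exists and equals $-\tfrac1p\essinf_A I$. The remaining case $\essinf_A I=\infty$ (where $\nu_n(A)$ may vanish for some $n$) is handled directly using $g(0)=0$ and $0\le g\le1$, and again drives $\tfrac1n\log g(\nu_n(A))$ to $-\infty$. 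Altogether,
\[
J_A=\lim_{n\to\infty}\tfrac1n\log g(\nu_n(A))=-\essinf_A\bigl(\tfrac1pI\bigr)\qquad\text{for all }A\in\mathcal{X}.
\]

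Since $\phi$ is maxitive on $L^0$ (the principle of the largest term together with subadditivity of $\mathcal{E}_{\nu_n,g}$, as in item (1) of Examples~\ref{examples}), the previous display is precisely condition~(4) of Theorem~\ref{thm:main1} with penalty $\tfrac1pI$, so condition~(6) of that theorem holds, i.e. $\phi(f)={\rm ML}_{I/p}(f)=\esssup_X\bigl(f-\tfrac1pI\bigr)$ for all $f\in L^{\phi}$. To see that the $\limsup$ in $\phi(f)$ is in fact a limit, fix $f\in L^{\phi}$, pick a subsequence $(n_k)$ realizing $\liminf_n\tfrac1n\log\mathcal{E}_{\nu_n,g}(e^{nf})$, and set $\psi(h):=\limsup_k\tfrac1{n_k}\log\mathcal{E}_{\nu_{n_k},g}(e^{n_kh})$. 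Then $\psi$ is a maxitive monetary risk measure on $L^0$ with $\psi\le\phi$ (hence $L^{\phi}\subset L^{\psi}$), and its concentration is given by the analogous formula $J^{\psi}_A=\limsup_k\tfrac1{n_k}\log g(\nu_{n_k}(A))=J_A=-\essinf_A\bigl(\tfrac1pI\bigr)$, because the full sequence already converges. Applying Theorem~\ref{thm:main1} to $\psi$ gives $\psi(f)=\esssup_X(f-\tfrac1pI)=\phi(f)$, and since $\psi(f)\le\liminf_n\tfrac1n\log\mathcal{E}_{\nu_n,g}(e^{nf})\le\limsup_n\tfrac1n\log\mathcal{E}_{\nu_n,g}(e^{nf})=\phi(f)$, the limit exists and equals $\esssup_X(f-\tfrac1pI)$.

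The step I expect to require the most care is the concentration computation: one must confirm that $J_A$ is a genuine limit, not merely a $\limsup$, for every $A\in\mathcal{X}$ — including the degenerate configuration where $\essinf_A I=\infty$ and $\nu_n(A)$ vanishes along a subsequence — since it is precisely this limit-existence of $J$ that makes the final subsequence argument close. Everything else is a routine application of Theorem~\ref{thm:main1}.
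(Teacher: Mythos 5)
Your proposal is correct and follows essentially the same route as the paper: set $h:=g^p$, use the two-sided bound $x\le h(x)\le x\,h'_+(0)$ from concavity and Lemma~\ref{lem:pconcave} to compute $J_A=-\essinf_A\bigl(\tfrac1pI\bigr)$ as a genuine limit, invoke Theorem~\ref{thm:main1} (4)$\Rightarrow$(6) for the representation on $L^\phi$, and close with the subsequence device from Proposition~\ref{prop:sharpLDP}. The cosmetic differences (taking $p$-th roots to sandwich $g$ directly, and treating $\essinf_A I=\infty$ as a separate case) do not change the argument; in particular the degenerate case is already handled implicitly in the paper since the bounds $h(0)\le 0\cdot h'_+(0)$ and $h(0)\ge 0$ are preserved under the convention $\log 0=-\infty$.
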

\begin{proof}
Set  \( h := g^p \). Given \( A \in \mathcal{X} \), by Lemma~\ref{lem:pconcave}, we have
\begin{align*}
\limsup_{n \to \infty} \tfrac{1}{n} \log (g \circ \nu_n(A))
&= \limsup_{n \to \infty} \tfrac{1}{np} \log (h \circ \nu_n(A)) \\
&\leq \limsup_{n \to \infty} \tfrac{1}{np} \log \left(\nu_n(A) \cdot h'_+(0)\right) \\
&\leq \limsup_{n \to \infty} \tfrac{1}{np} \log (\nu_n(A)) 
    + \limsup_{n \to \infty} \tfrac{1}{np} \log (h'_+(0)) \\
&= -\essinf_A \tfrac{1}{p} I,
\end{align*}
where we have used that \((\nu_n)_{n \in \mathbb{N}}\) satisfies the sharp LDP with rate  \(I\). On the other hand, by concavity, \( h(x) \geq x \) for all \( x \in [0,1] \). Hence,
\begin{align*}
\liminf_{n \to \infty} \tfrac{1}{n} \log (g \circ \nu_n(A))
&= \liminf_{n \to \infty} \tfrac{1}{np} \log (h \circ \nu_n(A)) \\
&\geq \liminf_{n \to \infty} \tfrac{1}{np} \log (\nu_n(A)) \\
&= \tfrac{1}{p} \lim_{n \to \infty} \tfrac{1}{n} \log (\nu_n(A)) \\ 
&= -\essinf_A \tfrac{1}{p} I.
\end{align*}
It follows that
\[
J_A = \lim_{n \to \infty} \tfrac{1}{n} \log (g \circ \nu_n(A)) = -\essinf_A \tfrac{1}{p} I \quad \text{for all } A \in \mathcal{X}.
\]
Then, by Theorem~\ref{thm:main1}, we obtain
\[
\phi(f) = \limsup_{n \to \infty} \tfrac{1}{n} \log \mathcal{E}_{\nu_n, g}(e^{n f}) = \esssup_X \left(f - \tfrac{1}{p} I \right) \quad \text{for all } f \in L^{\phi}.
\]

Finally, a subsequence argument similar to that in the proof of Proposition~\ref{prop:sharpLDP} shows that the limit superior above is indeed a limit.
\end{proof}

Tsanakas~\cite{tsanakas} introduced the \emph{distortion-exponential} insurance premium principle. For an insurance claim whose random outcome is given by a real-valued random variable \(\xi\), defined on a probability space \((\Omega, \Sigma, P)\), the premium to be paid for the claim is 
\begin{equation}\label{eq:dist-Exp}
\Pi_{g, \gamma}(\xi) := \tfrac{1}{\gamma} \log \mathcal{E}_{P, g}\left(e^{\gamma \xi}\right),
\end{equation}
where \(\gamma > 0\) is the risk aversion parameter, and \(g\) is a concave distortion  reflecting the insurer's attitude towards risk. 
In the special case where \(g(x) = x\) is the identity function, we recover the usual exponential premium principle
\begin{equation}\label{eq:Exp}
\Pi_{\gamma}(\xi) := \tfrac{1}{\gamma} \log \mathbb{E}_{P}\left(e^{\gamma \xi}\right).
\end{equation}
We now use Theorem~\ref{thm:distorted} to study the asymptotic behavior of the distortion-exponential principle \eqref{eq:dist-Exp} under the pooling of risks. Consider a homogeneous portfolio of \(n\) insurance claims, whose uncertain outcomes are represented by the real-valued random variables \(\xi_1, \xi_2, \ldots, \xi_n\). According to \eqref{eq:dist-Exp}, the premium to be paid for the entire portfolio is \(\Pi_{g, \gamma}(\xi_1 + \cdots + \xi_n)\). Hence, the premium for each individual contract in this portfolio is given by
\[
\pi_n = \tfrac{1}{n} \Pi_{g, \gamma}(\xi_1 + \cdots + \xi_n).
\]
The following result gives the asymptotics of the individual premium as the number of claims tend to infinity.

\begin{corollary}\label{cor:premium}
Suppose that \( g \) is a concave distortion  with vanishing order \( p \). Consider a sequence of insurance contracts \( \xi_1, \xi_2, \dots \) such that the laws \( \nu_n \) of the averages \( \bar{\xi}_n \) 
satisfy the sharp LDP with rate \( I \). Assume that there exists \( t > 1 \) such that
\begin{equation}\label{eq:premiumCond}
\limsup_{n \to \infty} \tfrac{1}{n} \Pi_{g, \gamma}\left(t(\xi_1 + \xi_2 + \cdots + \xi_n)\right) < \infty.
\end{equation}
Then, 
\begin{equation}\label{eq:premium}
\lim_{n \to \infty} \pi_n = \esssup_{x \in \mathbb{R}} \left( x - \tfrac{1}{p\gamma} I(x) \right).
\end{equation}
Moreover, if the sequence \( \xi_1, \xi_2, \ldots \) is i.i.d.~and satisfies the conditions (1) and (2) in Corollary~\ref{cor:iid}, then
\begin{equation}\label{eq:premiumiid}
\lim_{n \to \infty} \pi_n = \Pi_{p\gamma}(\xi).
\end{equation}
\end{corollary}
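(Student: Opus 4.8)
The plan is to derive both \eqref{eq:premium} and \eqref{eq:premiumiid} as consequences of Theorem~\ref{thm:distorted}, after checking that the hypotheses of that theorem are met. First I would rewrite the individual premium in a form to which Theorem~\ref{thm:distorted} directly applies. Since $\xi_1+\cdots+\xi_n = n\bar\xi_n$, we have
\[
\pi_n = \tfrac1n \Pi_{g,\gamma}(\xi_1+\cdots+\xi_n) = \tfrac{1}{n\gamma}\log \EE_{P,g}\bigl(e^{\gamma n\bar\xi_n}\bigr).
\]
Writing $f(x)=\gamma x \in L^0(\R,\BB(\R))$ and recalling that $\nu_n$ is the law of $\bar\xi_n$ on $\R$, the change-of-variables formula gives $\EE_{P,g}(e^{\gamma n\bar\xi_n}) = \EE_{\nu_n,g}(e^{nf})$, so $\pi_n = \tfrac1\gamma\cdot\tfrac1n\log\EE_{\nu_n,g}(e^{nf})$. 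The condition \eqref{eq:premiumCond} says exactly that $\limsup_n \tfrac1n\log\EE_{\nu_n,g}(e^{ntf})<\infty$, i.e.\ $\tfrac{tf}{\gamma}\cdot\gamma = tf$ witnesses $f\in L^\phi$ for the distorted Varadhan functional $\phi$ in \eqref{eq:distrotedVaradhan} (more precisely $\phi(tf)<\infty$ with $t>1$). Hence Theorem~\ref{thm:distorted} applies to $f$ and yields
\[
\lim_{n\to\infty}\tfrac1n\log\EE_{\nu_n,g}(e^{nf}) = \esssup_{x\in\R}\bigl(\gamma x - \tfrac1p I(x)\bigr).
\]
Dividing by $\gamma>0$ and using homogeneity of $\esssup$ under the positive scalar $1/\gamma$ gives $\lim_n\pi_n = \esssup_{x\in\R}(x - \tfrac{1}{p\gamma}I(x))$, which is \eqref{eq:premium}.

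For the i.i.d.\ case I would first invoke Corollary~\ref{cor:iid}: under conditions (1) and (2) there, the laws $\nu_n$ of the sample means $\bar\xi_n$ satisfy the sharp LDP with rate $\Lambda^\ast$, where $\Lambda$ is the logarithmic moment generating function of $\nu_1$ (the law of $\xi:=\xi_1$). So we may take $I=\Lambda^\ast$ in \eqref{eq:premium}, giving $\lim_n\pi_n = \esssup_{x\in\R}(x-\tfrac{1}{p\gamma}\Lambda^\ast(x))$. It then remains to identify this essential supremum with $\Pi_{p\gamma}(\xi) = \tfrac{1}{p\gamma}\log\E_P(e^{p\gamma\xi}) = \tfrac{1}{p\gamma}\Lambda(p\gamma)$. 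By the Fenchel–Moreau biconjugation applied to the convex, lower semicontinuous $\Lambda$ (finiteness of $\Lambda$ is assumed, and $\Lambda^{\ast\ast}=\Lambda$ on its domain), we have $\Lambda(p\gamma) = \sup_{x\in\R}(p\gamma\, x - \Lambda^\ast(x))$, hence $\tfrac{1}{p\gamma}\Lambda(p\gamma) = \sup_{x\in\R}(x - \tfrac{1}{p\gamma}\Lambda^\ast(x))$. Thus the claim reduces to replacing this ordinary supremum by an essential supremum with respect to $\mu$.

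That replacement is the one point requiring a small argument — and I expect it to be the main (if minor) obstacle. Since $\Lambda^\ast$ is a good rate function (Remark~\ref{rem:cramer}), it is lower semicontinuous with compact sublevel sets, so the map $x\mapsto x - \tfrac{1}{p\gamma}\Lambda^\ast(x)$ is upper semicontinuous and attains its supremum on $\R$, say at a point $x_0$. If $x_0$ lies in the support of $\nu_1$ (equivalently, in the effective domain of $\Lambda^\ast$ up to its closure), then every neighbourhood of $x_0$ has positive $\mu$-measure, because $\mu$ has the same null sets as Lebesgue measure on $\R^d$ — wait, here $d$ may be $1$, but in general one works on $\R^d$ with the vector-valued claims; the argument is the same: $\mu$ and $\lambda$ share null sets, so any open set meeting $\{\Lambda^\ast<\infty\}$ has positive $\mu$-measure, and upper semicontinuity of the integrand then forces $\esssup_\mu = \sup$. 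One must check that the supremum is genuinely attained on $\{\Lambda^\ast<\infty\}$: this follows because outside that set the integrand is $-\infty$, so the sup over $\{\Lambda^\ast<\infty\}$ coincides with the sup over $\R$, and compactness of sublevel sets of $\Lambda^\ast$ together with the coercivity coming from the linear term $x$ (using that $\Lambda(p\gamma)<\infty$ bounds the growth) guarantees attainment. Assembling these remarks gives $\esssup_{x\in\R^d}(x-\tfrac{1}{p\gamma}\Lambda^\ast(x)) = \Lambda(p\gamma)/(p\gamma) = \Pi_{p\gamma}(\xi)$, completing the proof of \eqref{eq:premiumiid}.
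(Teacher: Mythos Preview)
Your proposal is correct and follows essentially the same route as the paper: rewrite $\pi_n$ via $f_\gamma(x)=\gamma x$, use \eqref{eq:premiumCond} to place $f_\gamma\in L^\phi$, apply Theorem~\ref{thm:distorted}, and for the i.i.d.\ part invoke Corollary~\ref{cor:iid} and Fenchel--Moreau. The only difference is in justifying $\esssup=\sup$: the paper dispatches this in one line by recalling (from the proof of Corollary~\ref{cor:iid}) that $\Lambda^\ast$ is continuous on its effective domain, which is cleaner than your upper-semicontinuity/attainment argument.
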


\begin{proof} 
To apply Theorem \ref{thm:distorted}, suppose \( X = \mathbb{R} \), \( \mathcal{X} = \mathcal{B}(\mathbb{R}) \), and that \( \mu\) has the same null sets as the Lebesgue measure on $\mathbb{R}$. Consider the monetary risk measure $\phi(f) := \limsup_{n \to \infty} \tfrac{1}{n} \log \mathcal{E}_{\nu_n, g}\left(e^{n f}\right)$, defined as in \eqref{eq:distrotedVaradhan}.  
By \eqref{eq:premiumCond}, the  function \( f_\gamma\colon \mathbb{R} \to \mathbb{R} \),  \( f_\gamma(x) = \gamma x \), satisfies \( f_\gamma \in L^\phi \). Then, it follows from Theorem \ref{thm:distorted} that
\begin{align*}
\lim_{n \to \infty} \pi_n &= \lim_{n \to \infty} \tfrac{1}{n} \Pi_{g, \gamma}(\xi_1 + \xi_2 + \cdots + \xi_n) \\
&= \tfrac{1}{\gamma}\lim_{n \to \infty} \tfrac{1}{n} \log \int_0^\infty g \circ \mathbb{P}\left(e^{n f_\gamma(\bar{\xi}_n)} > x\right) \, \mathrm{d}x \\
&= \tfrac{1}{\gamma}\phi(f_\gamma) = \esssup_{x \in \mathbb{R}} \left(x - \tfrac{1}{p\gamma} I(x)\right),
\end{align*} 
obtaining \eqref{eq:premium}. 
Now, if the sequence \( \xi_1, \xi_2, \ldots \) is i.i.d. and satisfies the conditions (1) and (2) of Corollary~\ref{cor:iid}, then the sequence $(\nu_n)_{n\in\N}$ verifies the sharp LDP with rate $\Lambda^\ast$. Hence, in that case
\[
\lim_{n \to \infty} \pi_n =\esssup_{x \in \mathbb{R}} \left(x - \tfrac{1}{p\gamma } \Lambda^\ast(x)\right).
\]
As pointing out in the proof of Corollary~\ref{cor:iid},  \( \Lambda^\ast \) is continuous on its effective domain. Thus, the essential supremum above is actually a supremum and,
\begin{align*}
\lim_{n \to \infty} \pi_n &= \sup_{x \in \mathbb{R}} \left(x - \tfrac{1}{p\gamma } \Lambda^\ast(x)\right) = \tfrac{1}{p\gamma} \sup_{x \in \mathbb{R}} \left(p\gamma  x - \Lambda^\ast(p\gamma  x)\right) = \tfrac{1}{p\gamma} \Lambda^{\ast\ast}(p\gamma) = \tfrac{1}{\gamma p} \Lambda(p\gamma),
\end{align*}
where in the last equality we have applied the Fenchel-Moreau theorem (see, e.g., \cite[Proposition A.6]{follmer}). The conclusion follows by noting that \( \Pi_{p\gamma}(\xi) = \frac{1}{p\gamma} \Lambda(p\gamma) \).
\end{proof}
Equality \eqref{eq:premiumiid} reveals that as the number of independent claims in a homogeneous portfolio grows, the individual distortion-exponential premium converges to the usual exponential premium  for a single claim, where the risk aversion parameter $\gamma p$ is determined by the vanishing order $p$ of the distortion  $g$.

\begin{appendix}

\section{Auxiliary results}\label{sec:A1}
In this section, we denote by \(\bar{L}^0\) the set of all (equivalence classes modulo \(\mu\)-a.s. equality) of $[-\infty,\infty]$-valued measurable functions on \(X\).  Given \(\mathcal{F} \subset \bar{L}^0\), we define
\[
U(\mathcal{F}) = \{ g \in \bar{L}^0 \colon f \leq g \text{ for all } f \in \mathcal{F} \}.
\]
\begin{theorem}[Föllmer, Theorem A.32]\label{thm:esssup}
Let \(\mathcal{F}\) be a non-empty subset of \(\bar{L}^0\). Then, there exists a unique element \(f^* \in U(\mathcal{F})\) such that \(f^* \leq g\) for all \(g \in U(\mathcal{F})\). If, in addition, \(\mathcal{F}\) satisfies \(f \vee g \in \mathcal{F}\) whenever \(f, g \in \mathcal{F}\), then there exists a sequence \(f_1 \leq f_2 \leq \ldots\) in \(\mathcal{F}\) such that \(f_n \uparrow f^*\) a.s.
\end{theorem}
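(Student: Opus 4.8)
The plan is to run the classical argument for the existence of an essential supremum, first reducing to a bounded, integrable situation so that monotone convergence is available, and then extracting the minimal upper bound as the a.s.\ limit of an increasing sequence of finite suprema.

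First I would reduce to the case where every $f \in \mathcal{F}$ takes values in $[-1,1]$. Fix a strictly increasing continuous bijection $\varphi \colon [-\infty,\infty] \to [-1,1]$ (for instance $\varphi(x) = x/(1+|x|)$ on $\mathbb{R}$ and $\varphi(\pm\infty) = \pm 1$), and set $\widetilde{\mathcal{F}} := \{\varphi \circ f \colon f \in \mathcal{F}\} \subset L^\infty$. Since $\varphi$ is a strictly increasing Borel bijection, composition with $\varphi$ induces an order isomorphism of $\bar{L}^0$ that preserves pointwise a.s.\ monotone limits; in particular $g \in U(\mathcal{F})$ if and only if $\varphi \circ g \in U(\widetilde{\mathcal{F}})$, so once a least element $\widetilde{f}^*$ of $U(\widetilde{\mathcal{F}})$ is found, $\varphi^{-1}(\widetilde{f}^*)$ is a least element of $U(\mathcal{F})$. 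Moreover, if $\mathcal{F}$ is closed under pairwise maxima then so is $\widetilde{\mathcal{F}}$, and an increasing sequence in $\widetilde{\mathcal{F}}$ converging a.s.\ to $\widetilde{f}^*$ pulls back through $\varphi^{-1}$ to an increasing sequence in $\mathcal{F}$ converging a.s.\ to $\varphi^{-1}(\widetilde{f}^*)$. Hence it suffices to prove the statement for a family bounded by $1$; I keep writing $\mathcal{F}$ for it.

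Next I would pass to finite suprema. Let $\mathcal{F}^\vee$ be the set of all $f_1 \vee \cdots \vee f_n$ with $n \in \mathbb{N}$ and $f_i \in \mathcal{F}$; it is bounded by $1$, closed under pairwise maxima, and $U(\mathcal{F}^\vee) = U(\mathcal{F})$. Put $c := \sup\{\E[g] \colon g \in \mathcal{F}^\vee\} \in [-1,1]$, pick $g_n \in \mathcal{F}^\vee$ with $\E[g_n] \to c$, and set $h_n := g_1 \vee \cdots \vee g_n \in \mathcal{F}^\vee$, so that $h_1 \le h_2 \le \cdots$ and still $\E[h_n] \to c$. Define $f^* := \sup_n h_n \in \bar{L}^0$; by monotone convergence $\E[f^*] = c$. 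I then verify $f^*$ is the least element of $U(\mathcal{F})$: for $f \in \mathcal{F}$ we have $f \vee h_n \in \mathcal{F}^\vee$, so $\E[f \vee h_n] \le c$, and letting $n \to \infty$ (monotone convergence again) gives $\E[f \vee f^*] \le c = \E[f^*]$; since $f \vee f^* \ge f^*$ this forces $f \le f^*$ a.s., so $f^* \in U(\mathcal{F})$. Conversely any $g \in U(\mathcal{F}) = U(\mathcal{F}^\vee)$ dominates every $h_n$, hence $g \ge f^*$ a.s.; thus $f^*$ is least, and uniqueness follows since two least elements dominate one another. Finally, if $\mathcal{F}$ is itself closed under pairwise maxima then $\mathcal{F}^\vee = \mathcal{F}$, so the sequence $(h_n)$ built above already lies in $\mathcal{F}$ and satisfies $h_n \uparrow f^*$ a.s., which is the last assertion.

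The only genuinely delicate point — and the step I would write out with most care — is the reduction to the bounded integrable setting: one must check that $\varphi$ transports the operator $U(\cdot)$, least elements, and pointwise a.s.\ monotone limits faithfully between $\mathcal{F}$ and $\widetilde{\mathcal{F}}$, including the handling of the values $\pm\infty$. Everything after that reduction is a routine monotone-convergence computation.
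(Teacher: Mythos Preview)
Your proof is correct and follows the classical route: reduce to a bounded family via a strictly increasing bijection of $[-\infty,\infty]$ onto $[-1,1]$, then use monotone convergence on the expectations of finite suprema to extract the least upper bound. This is precisely the argument given in F\"ollmer and Schied (Theorem~A.32), which the paper merely cites without reproducing a proof; so there is nothing in the paper to compare against beyond the reference itself, and your write-up matches that standard source.
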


Given a non-empty set \(\mathcal{F} \subset \bar{L}^0\), the measurable function \(f^*\) characterized in Theorem \ref{thm:esssup} is called the \emph{essential supremum} of \(\mathcal{F}\) and is denoted by \(\esssup \mathcal{F}\), and it is unique up to a.s.~equality. For the empty set, we define \(\esssup \emptyset = -\infty\) by convention. The \emph{essential infimum} of \(\mathcal{F}\) is defined by \(\essinf \mathcal{F} = -\esssup \{-f \colon f \in \mathcal{F}\}\).

\begin{proposition}\label{prop:intSup}
Suppose that \(f \colon X \to \mathbb{R} \cup \{-\infty\}\) is a measurable function that is bounded from above. Then
\[
\sup_{\nu \in \mathcal{M}_1(\mu)} \int_X f \, {\rm d}\nu = \esssup_X f.
\]
\end{proposition}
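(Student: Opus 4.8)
The plan is to establish the two inequalities $\sup_{\nu \in \mathcal{M}_1} \int_X f\,{\rm d}\nu \le \esssup_X f$ and $\sup_{\nu \in \mathcal{M}_1} \int_X f\,{\rm d}\nu \ge \esssup_X f$ separately. Write $M := \esssup_X f$. Since $f$ is bounded from above, $M \in [-\infty,\infty)$, and for every $\nu \in \mathcal{M}_1$ the integral $\int_X f\,{\rm d}\nu$ is well defined in $[-\infty,\infty)$ because the positive part $f^+$ is bounded. Note also that $\mathcal{M}_1$ is nonempty, since $\mu \in \mathcal{M}_1$, so the supremum is over a nonempty set.

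For the first inequality: by the definition of $\esssup_X$ we have $f \le M$ $\mu$-a.s., and since every $\nu \in \mathcal{M}_1$ satisfies $\nu \ll \mu$, also $f \le M$ $\nu$-a.s.; integrating gives $\int_X f\,{\rm d}\nu \le M$ for every $\nu \in \mathcal{M}_1$, and taking the supremum over $\nu$ yields the claim.

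For the second inequality: if $M = -\infty$ there is nothing to prove, since then $\int_X f\,{\rm d}\nu = -\infty$ for every $\nu \in \mathcal{M}_1$. So assume $M > -\infty$ and fix $\varepsilon > 0$. Set $A_\varepsilon := \{f > M - \varepsilon\} \in \mathcal{X}$. By the definition of the essential supremum, $\mu(f \le M-\varepsilon) < 1$, hence $\mu(A_\varepsilon) > 0$, and we may define the probability measure $\nu_\varepsilon(B) := \mu(B \cap A_\varepsilon)/\mu(A_\varepsilon)$ for $B \in \mathcal{X}$, which clearly satisfies $\nu_\varepsilon \ll \mu$, i.e., $\nu_\varepsilon \in \mathcal{M}_1$. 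On $A_\varepsilon$ one has $M - \varepsilon < f \le M$ $\mu$-a.s., so $f$ is bounded there and
\[
\int_X f\,{\rm d}\nu_\varepsilon = \frac{1}{\mu(A_\varepsilon)}\int_{A_\varepsilon} f\,{\rm d}\mu \ge M - \varepsilon.
\]
Letting $\varepsilon \downarrow 0$ gives $\sup_{\nu \in \mathcal{M}_1}\int_X f\,{\rm d}\nu \ge M$, which completes the argument.

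I do not expect any genuine obstacle here: the only points requiring a little care are the bookkeeping around integrability when $f$ may take the value $-\infty$ — handled by the boundedness of $f^+$ together with treating the degenerate case $M = -\infty$ separately — and the (immediate) verification that the normalized restriction $\nu_\varepsilon$ is absolutely continuous with respect to $\mu$.
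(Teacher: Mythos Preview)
Your proof is correct and follows essentially the same approach as the paper: both establish the nontrivial inequality by passing to the conditional probability measure $\mu(\cdot\mid A)$ on a set $A$ where $f$ is close to its essential supremum. The only cosmetic difference is that the paper argues the lower bound by contradiction while you argue it directly via the sets $A_\varepsilon=\{f>M-\varepsilon\}$; your direct version is arguably cleaner and handles the degenerate case $M=-\infty$ explicitly.
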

\begin{proof}
Clearly, \(\sup_{\nu \in \mathcal{M}_1(\mu)} \int_X f \, {\rm d}\nu \leq \esssup_X f\). Suppose, for the sake of contradiction, that
\begin{equation}\label{eq:sup}
\sup_{\nu \in \mathcal{M}_1(\mu)} \int_X f \, {\rm d}\nu < \esssup_X f.
\end{equation}
Then there exists \(A \in \mathcal{X}\) with \(\mu(A) > 0\) such that
\begin{equation}\label{eq:supInt}
\sup_{\nu \in \mathcal{M}_1(\mu)} \int_X f \, {\rm d}\nu < f \quad \text{on } A.
\end{equation}
Since \(\mu(\cdot \mid A) \in \mathcal{M}_1(\mu)\), we have
\[
\sup_{\nu \in \mathcal{M}_1(\mu)} \int_X f(x) \, {\rm d}\nu \geq \int_X f(x) \, \mu({\rm d}x \mid A).
\]
On the other hand, \eqref{eq:supInt} implies that
\[
\sup_{\nu \in \mathcal{M}_1(\mu)} \int_X f(x) \, {\rm d}\nu < \int_X f(x) \, \mu(\textbf{d}x \mid A),
\]
which is a contradiction.
\end{proof}

\section{Proofs}\label{sec:A2}
In this subsection, we prove the results presented in Section \ref{sec:mainResults}. To do so, we adopt some notational conventions. First, we set $(\pm\infty)\cdot 0 := 0$, so that for any two functions \(f, g \colon X \to [-\infty, \infty]\), the function \(f1_A + g1_{A^c}\) assumes the same values as \(f\) on \(A \subset X\) and the same values as \(g\) on \(A^c\). Second, we define \(\essinf_A f = \infty\) and \(\esssup_A f = -\infty\) whenever \(\mu(A) = 0\). 
In line with Section \ref{sec:mainResults}, we fix a monetary risk measure \(\phi\colon L^0 \to [-\infty, \infty]\), define its associated concentration function \(J\) as in \eqref{eq:concentration}, the minimal penalty as in \eqref{eq:minrate0}, and the set \(L^\phi\) as in \eqref{eq:unbounded}.

The following lemma describes the relationship between \(J\) and \(\I\).

\begin{lemma}\label{lem:IJ}
The following equality holds:
\begin{equation}\label{eq:relationIJ}
-\I = \essinf \{J_A \cdot 1_A \colon A \in \XX\}\mbox{ a.s.}
\end{equation}
Moreover, for any measurable function \(I \colon X \to [0, \infty]\), we have:
\begin{enumerate}
    \item If \(J_A \leq -\essinf_A I\) for all \(A \in \XX\), then \(\I \geq I\)~a.s.
    \item If \(J_A \geq -\essinf_A I\) for all \(A \in \XX\), then \(\I \leq I\)~a.s.
\end{enumerate}
\end{lemma}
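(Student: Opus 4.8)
The plan is to reformulate \eqref{eq:relationIJ} as the a.s.\ equality $\I=\esssup\{(-J_A)\,1_A\colon A\in\XX\}$ — which is equivalent, since by definition $\essinf\{J_A1_A\colon A\}=-\esssup\{(-J_A)1_A\colon A\}$ — and then obtain (1) and (2) as short corollaries of this identity together with one pointwise inequality produced along the way. I will use freely that $\phi(f)\in\R$ for $f\in L^\infty$, that $J_A\le 0$ for every $A$ (because $r1_{A^c}\le 0$ forces $\phi(r1_{A^c})\le\phi(0)=0$), and that all essential suprema involved exist by Theorem~\ref{thm:esssup}.

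For the inequality $\I\ge\esssup\{(-J_A)1_A\colon A\}$, fix $A\in\XX$. For each $r<0$ the function $g:=r1_{A^c}$ lies in $L^\infty$, so $\I\ge g-\phi(g)$ a.s.; on $A$ this reads $\I\ge-\phi(r1_{A^c})$, and taking the supremum over $r<0$ gives $\I\ge-J_A$ a.s.\ on $A$. On $A^c$, using $g=0$, we get $\I\ge 0=(-J_A)1_A$. Hence $\I\ge(-J_A)1_A$ a.s.\ for every $A$, so $\I\ge\esssup\{(-J_A)1_A\colon A\}$. I record the pointwise inequality $\I\ge-J_A$ a.s.\ on $A$, valid for all $A\in\XX$, for use in part (1).

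The reverse inequality $\I\le\esssup\{(-J_A)1_A\colon A\}$ is the crux. Fix $g\in L^\infty$ and set $f:=g-\phi(g)$, so $\phi(f)=0$ and $g-\phi(g)=f$; it suffices to prove $f\le\esssup\{(-J_A)1_A\colon A\}$ a.s. Write $N:=\|f\|_\infty$ and $M:=\esssup_X f$. If $M\le 0$ then $f\le 0$, which lies in the family (take $A=X$, since $J_X=0$), so we are done; assume $M>0$. For $t\in(0,M)$ put $A_t:=\{f>t\}$, a set of positive measure. The key elementary step is the a.s.\ bound
\[
f\ \ge\ t+\bigl(-(N+t)\bigr)1_{A_t^c},
\]
which holds since the right-hand side equals $-N\le f$ on $A_t^c$ and equals $t<f$ on $A_t$. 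Monotonicity and translation invariance then give $0=\phi(f)\ge t+\phi\bigl(-(N+t)1_{A_t^c}\bigr)$, whence $\phi\bigl(-(N+t)1_{A_t^c}\bigr)\le-t$ and therefore $J_{A_t}\le-t$. Taking the pointwise supremum over the countable family $\{(-J_{A_t})1_{A_t}\colon t\in\Q\cap(0,M)\}$ and using $x\in A_t\Leftrightarrow f(x)>t$, we obtain at a.e.\ $x$ that $\esssup\{(-J_A)1_A\colon A\}(x)\ge f(x)$ when $f(x)>0$, while trivially $f(x)\le 0\le\esssup\{(-J_A)1_A\colon A\}(x)$ otherwise. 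Hence $g-\phi(g)\le\esssup\{(-J_A)1_A\colon A\}$ a.s., and since $g$ was arbitrary, $\I\le\esssup\{(-J_A)1_A\colon A\}$. This proves \eqref{eq:relationIJ}. I expect the only non-routine point to be finding the right \emph{lower} bound for $f$, so that (M) and (T) convert the constraint $\phi(f)=0$ into an \emph{upper} bound for $\phi$ of an indicator, and hence for $J_{A_t}$.

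Finally, (1) and (2) follow quickly. For (2): if $J_A\ge-\essinf_A I$ for all $A$, then on $A$ we have $(-J_A)1_A=-J_A\le\essinf_A I\le I$ a.s., and on $A^c$ we have $(-J_A)1_A=0\le I$; thus $(-J_A)1_A\le I$ a.s.\ for every $A$, and taking the essential supremum over $A$ gives $\I\le I$ a.s. For (1): if $J_A\le-\essinf_A I$ for all $A$, the recorded inequality $\I\ge-J_A$ on $A$ gives $\I\ge\essinf_A I$ a.s.\ on $A$ for every $A$; applied to the dyadic sets $B_{n,k}:=\{k2^{-n}\le I<(k+1)2^{-n}\}$ of positive measure this yields $\I\ge k2^{-n}$ a.s.\ on $B_{n,k}$, hence, letting $n\to\infty$, $\I\ge I$ a.s.\ on $\{I<\infty\}$; applied to $\{I\ge m\}$ it gives $\I\ge m$ a.s.\ on $\{I\ge m\}$, so $\I=\infty=I$ a.s.\ on $\{I=\infty\}$. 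Therefore $\I\ge I$ a.s., completing the proof.
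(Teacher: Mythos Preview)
Your proof is correct. Both directions of \eqref{eq:relationIJ} and parts (1)--(2) go through as written; in particular, the level-set bound $f\ge t-(N+t)1_{A_t^c}$ is the right way to convert $\phi(f)=0$ into $J_{A_t}\le -t$, and the dyadic-level-set argument for (1) is clean.

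Your route differs from the paper's in two places. For the harder inequality in \eqref{eq:relationIJ}, the paper approximates an arbitrary $f\in L^\infty$ from below by simple functions $s_n=\sum_k a_k 1_{A_k}$ with $(A_k)$ a partition, uses $\phi(f)\ge a_k+J_{A_k}$ on each piece, sums to obtain $\phi(f)\ge s_n+\essinf\{J_A 1_A\}$ a.s., and passes to the limit. You instead work directly with the superlevel sets $A_t=\{f>t\}$, which avoids simple-function approximation entirely and yields the bound $(-J_{A_t})1_{A_t}\ge t1_{A_t}$ in one step. Similarly, for part (1) the paper again uses simple functions $s_n\uparrow I$ over partitions and bounds $s_n\le\sum_k(\essinf_{A_k}I)1_{A_k}\le\sum_k(-J_{A_k})1_{A_k}\le\I$, while you slice $I$ by dyadic level sets and use the recorded inequality $\I\ge-J_A\ge\essinf_A I$ on each slice. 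The paper's partition approach makes the role of $\essinf\{J_A 1_A\}$ as a ``piecewise'' object more transparent, whereas your level-set approach is slightly more direct and uses only countably many test sets, which lets you take pointwise suprema without invoking the upward-filtering part of Theorem~\ref{thm:esssup}.
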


\begin{proof}
Fix \(A \in \XX\) and \(M \in \mathbb{N}\). By the definition of \(\I\), we have
\[
\I \geq -M 1_{A^c} - \phi(-M 1_{A^c})\mbox{ a.s.}
\]
Multiplying both sides by \(-1_A\), we obtain
\[
-\I 1_A \leq \phi(-M 1_{A^c}) 1_A\mbox{ a.s.}
\]
Letting \(M \to \infty\), we get
\[
-\I 1_A \leq J_A 1_A\mbox{ a.s.}
\]
Since \(-\I \leq 0\), it follows that
\[
-\I \leq J_A\mbox{ a.s.}
\]
Taking the essential infimum over all \(A \in \XX\), we obtain
\[
-\I \leq \essinf \{J_A \cdot 1_A \colon A \in \XX\}\mbox{ a.s.}
\]

To prove the other inequality, fix \(f \in L^\infty\). Suppose \((s_n)\) is a sequence of simple functions such that \(s_n \uparrow f\) a.s. For a fixed \(n \in \mathbb{N}\), express \(s_n\) as
\[
s_n = \sum_{k=1}^m a_k 1_{A_k},
\]
where \((A_k)_{k=1}^m\) is a finite partition of \(X\). Given \(r \in \mathbb{R}\) with \(r < -\|f\|_\infty\), for each \(k \in \{1, 2, \ldots, m\}\), we have
\[
\phi(f) \geq \phi(a_k 1_{A_k} + r 1_{A_k^c}) = a_k + \phi((r - a_k) 1_{A_k^c}).
\]
Letting \(r \to -\infty\), we obtain
\[
\phi(f) \geq a_k + J_{A_k} \quad \text{for all } k.
\]
Multiplying by \(1_{A_k}\) and summing over all \(k\), we get
\[
\phi(f) \geq s_n + \sum_{k=1}^m J_{A_k} 1_{A_k} \geq s_n + \essinf \{J_A \cdot 1_A \colon A \in \XX\}\mbox{ a.s.}
\]
Letting \(n \to \infty\) yields
\[
-\essinf \{J_A \cdot 1_A \colon A \in \XX\} \geq f - \phi(f)\mbox{ a.s.}
\]
Taking the essential supremum over all \(f \in L^\infty\), we obtain
\[
-\essinf \{J_A \cdot 1_A \colon A \in \XX\} \geq \I\mbox{ a.s.,}
\]
completing the proof of~\eqref{eq:relationIJ}.

To prove (1), suppose that \(J_A \leq -\essinf_A I\) for all \(A \in \XX\). 
Take a sequence \((s_n)\) of non-negative simple functions such that \(s_n \uparrow I\) a.s. 
For a fixed \(n \in \mathbb{N}\), express \(s_n\) as 
\[
s_n = \sum_{k=1}^m a_k 1_{A_k},
\]
where \((A_k)_{k=1}^m\) is a finite partition of \(X\) and \(a_k \geq 0\). 
Then, 
\[
s_n \leq \sum_{k=1}^m \left(\essinf_{A_k} I\right) 1_{A_k} \leq \sum_{k=1}^m (-J_{A_k}) 1_{A_k} \leq \I\mbox{ a.s.,}
\]
where we have applied \eqref{eq:relationIJ} in the last inequality. 
Letting \(n \to \infty\), we obtain \(I \leq \I\)~a.s. as desired. 

To prove (2), assume that \(J_A \geq -\essinf_A I\) for all \(A \in \XX\). Fixed \(A \in \XX\), we have
\[
J_A \cdot 1_A \geq \left(-\essinf_A I\right)1_A \geq -I\mbox{ a.s.}
\]
Taking the essential infimum over all \(A \in \XX\), we reach the conclusion.
\end{proof}

Next, we prove Theorem~\ref{thm:main1}.

\begin{proof}
\textbf{$(1) \Rightarrow (2)$:} Assume \((f_n)_{n \in \mathbb{N}} \subset L^\infty\) and \(\sup_{n \in \mathbb{N}} f_n \in L^\infty\). For each \(n \in \mathbb{N}\), define \(g_n := \bigvee_{i=1}^n f_i\). Since \(\phi|_{L^\infty}\) is maxitive, it follows that 
\[
\phi(g_n) = \bigvee_{i=1}^n \phi(f_i) \quad \text{for all } n \in \mathbb{N}.
\]
Given that \(\phi|_{L^\infty}\) is also continuous from below, and since \(g_n \uparrow \sup_{n \in \mathbb{N}} f_n\) a.s. as \(n \to \infty\), we can take the limit as \(n \to \infty\) to obtain 
\[
\phi\left(\sup_{n \in \mathbb{N}} f_n\right) = \sup_{n \in \mathbb{N}} \phi(g_n) = \sup_{n \in \mathbb{N}} \phi(f_n).
\]

\textbf{$(2) \Rightarrow (1)$:} It is simple to verify.

\textbf{$(1) \Rightarrow (3)$:} Since \(\phi|_{L^\infty}\) is maxitive, it follows that \(\phi|_{L^\infty}\) is convex; see \cite[Proposition 2.1]{kupper}. Define \(\mathcal{A} := \{f \in L^\infty : \phi(f) \leq 0\}\). The maxitivity of \(\phi\) implies that \(f \vee g \in \mathcal{A}\) for all \(f, g \in \mathcal{A}\). Moreover, by the definition of \(\I\) and property (T), we have \(\I = \esssup \mathcal{A}\).
Applying Theorem~\ref{thm:esssup}, there exists a sequence \(f_1 \leq f_2 \leq \dots\) in \(\mathcal{A}\) such that \(f_n \uparrow \I\) a.s. Now, fix \(\nu \in \MM_1\). Since \(\nu \ll \mu\), we have \(f_n \uparrow \I\) \(\nu\)-a.s. Therefore, by the monotone convergence theorem, we obtain
\[
(\phi|_{L^\infty})^\ast(\nu) = \sup_{f \in \mathcal{A}} \int_X f \, {\rm d}\nu \geq \sup_n \int_X f_n \, {\rm d}\nu = \int_X \I \, {\rm d}\nu.
\]
On the other hand, since \(f \leq \I\) \(\mu\)-a.s. and \(\nu\)-a.s. for all \(f \in \mathcal{A}\), we have
\[
(\phi|_{L^\infty})^\ast(\nu) = \sup_{f \in \mathcal{A}} \int_X f \, {\rm d}\nu \leq \int_X \I \, {\rm d}\nu.
\]
This completes the proof of (3).

\textbf{$(3) \Rightarrow (5)$:} Fix \(f \in L^\infty\). Since \(\phi|_{L^\infty}\) is convex and continuous from below,\footnote{Recall that the definition of a monetary risk measure is taken from~\cite{follmer} with a sign change. Thus, continuity from below, as defined here, corresponds to continuity from above in \cite{follmer}.} by \cite[Theorem 4.33]{follmer}, we have
\[
\phi(f) = \sup_{\nu \in \MM_1} \left\{ \int_X f \, {\rm d}\nu - (\phi^\ast|_{L^\infty})(\nu) \right\} = \sup_{\nu \in \MM_1} \left\{ \int_X f \, {\rm d}\nu - \int_X \I \, {\rm d}\nu \right\}
\]
\[
 = \sup_{\nu \in \MM_1} \left\{ \int_X (f - \I) \, {\rm d}\nu \right\}
= \esssup_X(f - \I) = {\rm ML}_{\I}(f),
\]
where we have applied Proposition~\ref{prop:intSup}. This proves (5).

\textbf{$(5) \Rightarrow (1)$:} The penalized maximum loss \({\rm ML}_{\I}\) is clearly maxitive and continuous from below.

\textbf{$(5) \Rightarrow (4)$:}  
If (5) holds, then \(\phi\) is clearly maxitive. Fix \(A \in \XX\). For any \(r < 0\), we have
\[
\phi(r 1_{A^c}) = \esssup_X(r 1_{A^c} - \I) \geq \esssup_X(-\infty \cdot 1_{A^c} - \I) = -\essinf_A \I.
\]
Letting \(r \to -\infty\), it follows that \(J_A \geq -\essinf_A \I\). To prove the other inequality, fix \(M > 0\), and set \(\alpha := (\essinf_A \I) \wedge M \in [0, \infty)\). We have \(\mu\left(A \cap \{\I < \alpha\}\right) = 0\). Consequently, for a given \(r < 0\), it holds that 
\[
r 1_{A^c} - \I \leq r \quad \text{ a.s. on } \{\I < \alpha\}.
\]
Moreover, since \(r 1_{A^c} \leq 0\), we have 
\[
r 1_{A^c} - \I \leq -\alpha \quad \text{ a.s. on } \{\I \geq \alpha\}.
\]
Choosing \(r < -\alpha\), it follows that \(r 1_{A^c} - \I \leq -\alpha\) a.s. Thus,
\[
J_A \leq \phi(r 1_{A^c}) = \esssup_X(r 1_{A^c} - \I) \leq -\alpha = (-\essinf_A \I) \vee -M.
\]
Letting \(M \to \infty\), we obtain \(J_A \leq -\essinf_A \I\), and (4) follows.

\textbf{$(4) \Rightarrow (5)$:} To show \eqref{eq:concentrationRep}, we first assume that \(f \in L^\infty\) is a simple function, that is, \(f = \sum_{i=1}^N a_i 1_{\{f = a_i\}}\) with \(a_1 < a_2 < \dots < a_N\). In that case, 
\[
f = \bigvee_{1 \le i \le N} \left(a_i 1_{\{a_i \ge f > a_{i-1}\}} + r 1_{\{a_i \ge f > a_{i-1}\}^c}\right) 
\]
for \(r < -\|f\|_\infty\), where we define \(a_0 = -\infty\). 
Since \(\phi\) is maxitive, we have
\[
\phi(f) = \bigvee_{1 \le i \le N} \phi\left(a_i 1_{\{a_i \ge f > a_{i-1}\}} + r 1_{\{a_i \ge f > a_{i-1}\}^c}\right) = \bigvee_{1 \le i \le N} \left(a_i + \phi\left((r - a_i) 1_{\{a_i \ge f > a_{i-1}\}^c}\right)\right).
\]
Letting \(r \to -\infty\), we get
\[
\phi(f) = \bigvee_{1 \le i \le N} \left(a_i + J_{\{a_i \ge f > a_{i-1}\}}\right) = \bigvee_{1 \le i \le N} \left(a_i - \essinf_{\{a_i \ge f > a_{i-1}\}} \I\right)
\]
\[
 = \bigvee_{1 \le i \le N} \esssup_{\{a_i \ge f > a_{i-1}\}} \left(a_i - \I\right) = \esssup_X \left(f - \I\right),
\]
where the second equality follows from (4). Since the simple functions are dense in \(L^\infty\) and \(\phi\) is Lipschitz continuous (see \cite[Lemma 4.3]{follmer}) the result follows.

\textbf{$(5) \Rightarrow (6)$:} We assume that  $\phi$ is maxitive on $L^0$. Fix $f\in L^\phi$. 
By definition of $L^\phi$ we can take $t>1$ such that $\phi(tf)<\infty$.   
Fix $m\in\N$.  
Given $h\in L^0_+$ with $-h\le t(f-m)$, we have
\begin{align*}
-m + \phi(f 1_{\{f\ge m\}}-h 1_{\{f<m\}})&= \phi\big((f-m) 1_{\{f\ge m\}}-(h+m) 1_{\{f<m\}}\big)\\
&\le \phi\left(t(f-m) 1_{\{f\ge m\}}-h 1_{\{f<m\}}\right)\\
&\le \phi(t(f-m))\\
&= -mt + \phi(tf).
\end{align*}
Taking the infimum over all $h\in L^0_+$ with $-h\le t(f-m)$, we have 
\[
\inf_{h\in L^0_+} \phi\left(f 1_{\{f\ge m\}}-h 1_{\{f<m\}}\right)\le (1-t)m + \phi(tf).
\]
Since $t>1$ and $\phi(tf)<\infty$, the right hand side converges to $-\infty$ as $m\to\infty$.  
 Letting $m\to\infty$ results in 
\begin{equation}\label{eq:tail}
\lim_{m\to\infty} \inf_{h\in L^0_+} \phi\left(f 1_{\{f\ge m\}}-h 1_{\{f<m\}}\right)
=-\infty.
\end{equation}

In the following, for  $a,b\in[-\infty,\infty]$, $a<b$, we define the truncated function $f_{a,b}:=a\vee (f\wedge b)$.  
Fixed $m,n\in\N$, by the maxitivity of $\phi$, we have that 
\[
\phi(f)\le\phi(f_{-m,\infty})\le \phi(f_{-m,n})\vee 
\inf_{h\in L^0_+} \phi\left(f 1_{\{f\ge n\}}-h 1_{\{f<n\}}\right).
\]
Since $f_{-m,n}\in L^\infty$, applying (5) we have
\begin{align*}
\phi(f)&\le {\rm ML}_I(f_{-m,n})\vee 
\inf_{h\in L^0_+} \phi\left(f 1_{\{f\ge n\}}-h 1_{\{f<n\}}\right)\\
&\le  {\rm ML}_I(f_{-m,\infty})\vee 
\inf_{h\in L^0_+} \phi\left(f 1_{\{f\ge n\}}-h 1_{\{f<n\}}\right).
\end{align*}
Letting $n\to\infty$, it follows from \eqref{eq:tail} that
\begin{align*}
\phi(f) &\le {\rm ML}_I(f_{-m,\infty})= {\rm ML}_I(-m\vee f)= (-m)\vee{\rm ML}_I(f). 
\end{align*} 
Now, letting $m\to\infty$, we get 
\[
\phi(f) \le {\rm ML}_I(f).
\]
On the other hand, fixed $n,m\in\N$, since $\phi$ is maxitive
\begin{align*}
(-m) \vee \phi(f)&=\phi\big((-m) \vee f\big)\ge \phi\left(f_{-m,n}\right)
={\rm ML}_I(f{-m,n})\ge {\rm ML}_I(f{-\infty,n}).
\end{align*}
Taking the limit as $m\to\infty$, we get
\[
\phi(f)\ge {\rm ML}_I(f{-\infty,n}).
\]
Letting $n\to\infty$, we finally obtain $\phi(f)\ge {\rm ML}_I(f)$, which proves (6).

\textbf{$(6) \Rightarrow (5)$:} It is obvious.
 
Finally, if (4) is satisfied by \(I\), then \(I = \I\) a.s. by Lemma~\ref{lem:IJ}.
If (3), (5), or (6) are satisfied by \(I\), then, following the arrows in the proof above,  (4) must also be satisfied by \(I\), implying that \(I = \I\)~a.s. 
This completes the proof of Theorem~\ref{thm:main1}.
\end{proof}

We now turn to the proof of Theorem~\ref{thm:main2}.

\begin{proof}
\textbf{\((1) \Rightarrow (2)\):} By the monotonicity and translation invariance of \(\phi\), we have
\[
\phi(f) \leq \phi\left(\esssup_X f\right) = \phi(0) + \esssup_X f = \esssup_X f = {\rm ML}(f)
\]
for all \(f \in L^\infty\).

On the other hand, if \(J_A = 0\) whenever \(\mu(A) > 0\), then by Lemma~\ref{lem:IJ}, 
\[
\I = -\essinf\{J_A 1_A : A \in \XX\} = 0\mbox{ a.s.}
\]
Therefore,
\[
{\rm ML}(f) = \esssup_X f = \esssup_X(f - \I) \leq \phi(f) \quad \text{for all } f \in L^\infty,
\]
which implies \((2)\).

\textbf{\((2) \Rightarrow (1)\):} This follows directly from the implication \((5) \Rightarrow (4)\) in Theorem~\ref{thm:main1}.

\textbf{\((2) \Leftrightarrow (3)\):} If \(\phi\) is maxitive, this equivalence follows from \((5) \Leftrightarrow (6)\) in Theorem~\ref{thm:main1}.

\textbf{\((2) \Rightarrow (4)\):} This is immediate.

\textbf{\((4) \Rightarrow (2)\):} We assume that \(\mu\) is atomless and that \(L^2\) is separable. Suppose \(\phi\) is maxitive and law-invariant. Given that \(\phi\) is law-invariant, \(\mu\) is atomless, and \(L^2\) is separable, by \cite[Remark 4.64]{follmer} (a result due to~\cite{jouini}), we know that \(\phi\) is continuous from below. Then, by Theorem~\ref{thm:main1}, we have
\[
\phi(f) = \esssup_X(f - \I) \quad \text{for all } f \in L^\infty.
\]
It suffices to prove that \(\I = 0\) a.s. Suppose, by contradiction, that \(\mu(\I > 0) > 0\). Then,
\[
0 = -\phi(0) = -\esssup_X(-\I) = \essinf_X \I.
\]
Thus, we can find rational numbers \(q_1, q_2 \in \mathbb{Q}\) with \(0 < q_1 < q_2\) such that
\[
\mu(\I > q_2) > 0 \quad \text{and} \quad \mu(\I < q_1) > 0.
\]
Take \(\varepsilon > 0\) such that \(\varepsilon < \mu(\I > q_2)\) and \(\varepsilon < \mu(\I < q_1)\). Since \(\mu\) is atomless, we can find \(A, B \in \XX\) such that \(A \subset \{\I > q_2\}\), \(B \subset \{\I < q_1\}\), and \(\mu(A) = \mu(B) = \varepsilon\). Take \(q \in (q_1, q_2)\). We then have
\[
\phi(q 1_A) = \esssup_X(q 1_A - \I) \leq \esssup_X((q - q_2) 1_A - \I 1_{A^c}) \leq 0.
\]
On the other hand,
\[
\phi(q 1_B) = \esssup_X(q 1_B - \I) \geq \esssup_X((q - q_1) 1_B - \I 1_{A^c}) = q - q_1 > 0.
\]
Since \(\phi\) is law-invariant and \(q 1_A\) and \(q 1_B\) have the same distribution (recall that \(\mu(A) = \mu(B)\)), we have
\[
0 \geq \phi(q 1_A) = \phi(q 1_B) > 0,
\]
which is a contradiction.
\end{proof}

We next prove Theorem~\ref{thm:main3}.

\begin{proof}

\textbf{\((1) \Leftrightarrow (2)\):} It is similar to \textbf{\((1) \Leftrightarrow (2)\)} in Theorem~\ref{thm:main1}.

\textbf{\((3) \Rightarrow (2)\):} It is straightforward.

\textbf{\((2) \Rightarrow (3)\):} 
The restriction \(\phi|_{L^\infty}\) is real-valued and $\sigma$-maxitive on \(L^\infty\). Hence, by Theorem \ref{thm:main1}, there exists a measurable function \(I \colon X \to [0, \infty]\) such that
\[
\phi(f) = {\rm ML}_I(f) \quad \text{for all } f \in L^\infty.
\]
Now, suppose that \(f \in L^0\) is essentially bounded from below. In this case, there exists a sequence \((f_n)_{n \in \mathbb{N}} \subset L^\infty\) such that \(f_n \uparrow f\) a.s. Then,
\[
\phi(f) = \lim_{n \to \infty} \phi(f_n) = \lim_{n \to \infty} {\rm ML}_I(f_n) = {\rm ML}_I(f),
\]
where we have used the fact that \({\rm ML}_I\) is continuous from below.

Finally, consider an arbitrary \(f \in L^0\). Given a fixed \(n \in \mathbb{N}\), since \((-n) \vee f\) is essentially bounded from below, applying the previous step, we obtain
\[
(-n) \vee \phi(f) = \phi\big((-n) \vee f\big) = {\rm ML}_I\big((-n) \vee f\big) = (-n) \vee {\rm ML}_I(f).
\]
Letting \(n \to \infty\), we obtain the desired conclusion.

Finally, note that if \((3)\) holds, then \(I = \I\)~a.s. by Theorem~\ref{thm:main1}.
\end{proof}

We finally prove Theorem~\ref{thm:main4}.

\begin{proof}
Fix any \( B \in \mathcal{X} \). For a given \( k \in \mathbb{N} \), since \(\phi\) is maxitive, we have
\begin{align*}
J_B &\leq J_{(B \cap A_k) \cup A_k^c} \leq J_{B \cap A_k} \vee J_{A_k^c} \leq \left(-\essinf_{B \cap A_k} I\right) \vee J_{A_k^c},
\end{align*}
where the last inequality follows from the condition (3). Letting \( k \to \infty \) and applying the condition (2), we conclude that 
\begin{equation}\label{eq:upperBound}
J_B \leq -\essinf_B I \quad \text{for all } B \in \mathcal{X}.
\end{equation}

Next, define \( A_\infty := \bigcap_{k \in \mathbb{N}} A_k^c \). We claim that
\begin{equation}\label{eq:extended3}
J_B = -\essinf_B I \quad \text{for any } B \subset A_k \text{ and } k \in \mathbb{N} \cup \{\infty\}.
\end{equation}
Since \eqref{eq:extended3} holds for \( k \in \mathbb{N} \) by the condition (3), it suffices to prove the case \( k = \infty \). Indeed, if \( B \subset A_\infty \), then
\[
J_B \leq J_{A_\infty} \leq J_{A_k^c} \quad \text{for all } k \in \mathbb{N},
\]
and letting \( k \to \infty \), the condition (2) yields
\[
J_B = -\infty = -\essinf_B I,
\]
where the last equality uses the condition (1), which implies \( B \subset A_\infty = \{I = \infty\} \).

For each \( k \in \mathbb{N} \cup \{\infty\} \), define \( I_k := I 1_{A_k} + \infty 1_{A_k^c} \). Given \( B \in \mathcal{X} \), using \eqref{eq:extended3}, we have
\[
J_B \geq J_{B \cap A_k} = -\essinf_{B \cap A_k} I = -\essinf_B I_k.
\]
Since \( B \in \mathcal{X} \) was arbitrary, Lemma \ref{lem:IJ} implies \( \I \leq I_k \)~a.s. for all \( k \in \mathbb{N} \cup \{\infty\} \). 

Observe that \( I = I_k \) on \( A_k \) for \( k \in \mathbb{N} \cup \{\infty\} \), and \( X = \bigcup_{k \in \mathbb{N} \cup \{\infty\}} A_k \). Therefore, \( \I \leq I \)~a.s. From \eqref{eq:upperBound} and Lemma \ref{lem:IJ}, we also obtain the reverse inequality, so we conclude that \( \I = I \)~a.s.

Finally, since \( \I = I \)~a.s., it follows from \eqref{eq:relationIJ} in Lemma \ref{lem:IJ} that
\[
J_B \geq -\essinf_B I \quad \text{for all } B \in \mathcal{X}.
\]
Combining this with \eqref{eq:upperBound}, we obtain \( J_B = -\essinf_B I \) for all \( B \in \mathcal{X} \). Therefore, all the conclusions follow from Theorem~\ref{thm:main1}.
\end{proof}

\end{appendix}

\end{document}